\theoremstyle{plain}
\newtheorem{theorem}{Theorem}[section]
\newtheorem{proposition}[theorem]{Proposition}
\newtheorem{lemma}[theorem]{Lemma}
\newtheorem{corollary}[theorem]{Corollary}
\numberwithin{equation}{section}
\theoremstyle{definition}
\newtheorem{definition}[theorem]{Definition}
\newtheorem{remark}[theorem]{Remark}
\newtheorem{example}[theorem]{Example}
\newtheorem{question}[theorem]{Question}
\newtheorem{problem}[theorem]{Problem}
\newcommand{\CP}{\mathbb{C}P}
\newcommand{\RP}{\mathbb{R}P}
\newcommand{\cC}{\mathcal{C}}
\newcommand{\cI}{\mathcal{I}}
\newcommand{\cG}{\mathcal{G}}
\newcommand{\edge}{\mathbf{e}}
\newcommand{\EC}{\mathcal{E}\mathcal{C}}
\newcommand{\comp}{\overline{\EC}}
\newcommand{\B}{\mathcal{B}}
\newcommand{\Q}{\mathbb{Q}}
\newcommand{\F}{\mathbb{F}}
\newcommand{\R}{\mathbb{R}}
\newcommand{\Z}{\mathbb{Z}}
\newcommand{\ie}{\textit{i}.\textit{e}. }
\DeclareMathOperator{\conv}{conv}
\DeclareMathOperator{\cone}{cone}
\begin{document}
\title[A decomposition of graph $a$-numbers]{A decomposition of graph $a$-numbers}

\author[S. Choi]{Suyoung Choi}
\address{Department of mathematics, Ajou University, 206, World cup-ro, Yeongtong-gu, Suwon 16499,  Republic of Korea}
\email{schoi@ajou.ac.kr}

\author[Y. Yoon]{Younghan Yoon}
\address{Department of mathematics, Ajou University, 206, World cup-ro, Yeongtong-gu, Suwon 16499,  Republic of Korea}
\email{younghan300@ajou.ac.kr}

\date{\today}
\subjclass[2020]{14M25, 55N10, 14P25, 57S12, 05C30}

\keywords{graph $a$-numbers, cohomology, Betti numbers, real toric varieties, graphs, unimodality, log-concavity, blow-ups, Lefschetz operators}

\thanks{This work was supported by the National Research Foundation of Korea Grant funded by
the Korean Government (RS-2025-00521982).}

\begin{abstract}
We study the $a$-sequence $(a_0(G), a_1(G), \cdots)$ of a finite simple graph $G$, defined recursively through a combinatorial rule and known to coincide with the sequence of rational Betti numbers of the real toric variety associated with $G$.

In this paper, we establish a combinatorial and topological decomposition formula for the $a$-sequence.
As an application, we show that the $a$-sequence is monotone under graph inclusion; that is, $a_i(G) \geq a_i(H)$ for all $i \geq 0$ whenever $H$ is a subgraph of $G$, and obtain the lower and upper bounds of $a_i$-numbers.
We also prove that the $a$-sequence is unimodal in $i$ for a broad class of graphs $G$, including those with a Hamiltonian circuit or a universal vertex. 
These results provide a new class of topological spaces whose Betti number sequences are unimodal but not necessarily log concave, contributing to the study of real loci in algebraic geometry.    
\end{abstract}
\maketitle

\section{Introduction}\label{sec:intro}
The \emph{$a$-number} is a graph invariant introduced by Choi and Park \cite{Choi-Park2015} to compute the rational Betti numbers of a canonical real toric variety $X^\R_G$ associated with a finite simple graph $G$. 
While its motivation is geometric, the $a$-number is defined purely in combinatorial terms, making it both topologically meaningful and computationally accessible. 
This fusion of geometry and combinatorics places the $a$-number at a crossroads of several active fields of research, ranging from real algebraic geometry to topological combinatorics.

Let $G$ be a finite simple graph with vertex set $V(G)$ and edge set $E(G)$.
The \emph{signed $a$-number}~$sa(G)$ of~$G$ is defined recursively as follows:
\begin{itemize}
    \item If $G=\emptyset$ is the null graph, then $sa(\emptyset) = 1$.
    \item If $G_1,\ldots, G_\ell$ are connected components of $G$, then 
    $$
        sa(G)=\prod_{i=1}^\ell sa(G_i).
    $$  
    \item If $G$ is connected, then
\begin{equation} \label{rec}
    sa(G) = \left\{
              \begin{array}{ll} 
                -\sum_{I\subsetneq V(G)}sa(G\vert_I), & \hbox{if $G$ has even order;} \\
                0, & \hbox{otherwise,}
              \end{array}
            \right.
\end{equation}
where $G\vert_I$ is the induced subgraph of $G$ by $I \subseteq V(G)$.
\end{itemize}
The \emph{$a$-number}~$a(G)$ of~$G$ is then defined as the absolute value of $sa(G)$, \ie, $a(G) := | sa(G) |$.

As originally intended, the $a$-number provides a purely combinatorial framework for computing homological invariants of $X^\R_G$, including both its rational Betti numbers and Euler characteristic, which are among the most fundamental topological quantities associated with a space.
By \cite{Choi-Park2015}, the $i$th rational Betti number of $X^\R_G$ is given by the \emph{$a_i$-number} of $G$, defined as
$$
    a_i(G):=\sum_{\substack{I\subseteq V(G)\\ |I| =2i}}a(G\vert_I),
$$
and the Euler characteristic of $X^\R_G$ is given by
$$
    b(G):=\sum_{I\subseteq V(G)}sa(G\vert_I).
$$
The sequence $(a_0(G), a_1(G), \ldots )$ of $G$ is called the \emph{$a$-sequence} of $G$.

When $G$ is a connected graph with an even number of vertices, the $a$-number and $a_i$-numbers can also be described via the M\"{o}bius function of a poset.
Consider the collection of vertex subsets whose induced subgraphs have no odd connected components. This collection, ordered by inclusion, forms a graded poset.
The signed $a$-number of $G$ is given by the M\"{o}bius function evaluated at the interval from the empty set to the full vertex set.
Moreover, the $i$th $a$-number corresponds to the absolute value of the $i$th Whitney number of the first kind of this poset.

The $a$-sequences are related to several well-known and combinatorially interesting sequences, depending on the structure of the underlying graph $G$.
For instance, when $G$ is a path graph, the $a$-number yields the Catalan numbers, and the $a_i$-numbers produce the Catalan triangle; see \cite{Choi-Park2015} and \cite{Stanley2015book-Catalan}.
When $G$ is a cycle graph, the resulting $a$-sequence forms the so-called half Pascal triangle.
The $a$-sequences can also be computed explicitly for complete graphs, star graphs, and more generally, complete bipartite graphs; see \cite{Choi-Park2015} and \cite{Seo-Shin2015} for detailed computations.
These connections illustrate the rich combinatorial structure encoded in the $a$-sequence and underscore its relevance in algebraic and enumerative combinatorics.

In this paper, we establish a structural decomposition of the $a_i$-number of a graph $G$, which reveals unexpected regularities, namely the monotonicity of the $a$-number and the unimodality of the $a_i$-numbers, that are not apparent from the recursive definition, and opens the way to attack two fundamental problems concerning the nature of these invariants.

To present a decomposition formula, we need to introduce some notations.
For each pair set~$\edge$ in~$V(G)$, $G + \edge$ is the graph obtained from~$G$ by adding~$\edge$ as an edge. 
If $\edge$ is already an edge of~$G$, then it does not change the graph, \ie, $G+\edge = G$.
We also consider the collection of vertex subsets that induced subgraphs with only even-order connected components, defined as
$$
    \EC(G) := \{I \subseteq V(G) \colon G\vert_I \text{ has no connected component of odd order}\}.
$$
Finally, we recall the notion of the \emph{reconnected complement}~$G^\ast_{I}$  of~$I$ in~$G$, introduced in~\cite{Carr-Devadoss2006}.
For a subset~$I \subseteq V(G)$, define the graph~$G^\ast_{I}$ on~$V(G) \setminus I$ as follows; for each unordered pairs~$\{a,b\} \subset V(G) \setminus I$, the pair $\{a,b\}$ is an edge of $G^\ast_{I}$ if and only if there exists a path from~$a$ to~$b$ in~$G_{I \cup \{a,b\}}$.
Further developments and applications of the reconnected complement can be found in~\cite{Buchstaber-Panov2015} and~\cite{Vladimir-Adam-Denis_2024}.

\begin{theorem}
Let $G$ be a finite simple graphs with~$n$ vertices, and let $\edge$ be a pair of vertices of~$G$.
For each $i \geq 0$, 
    $$
    a_i(G+\edge) = a_i(G) + \sum_{J \in \EC(G+\edge) \setminus \EC(G)}\left\vert  b(G\vert_J) \right\vert \cdot a_{i-\frac{|J|}{2}}((G +\edge)^\ast_{J}).
    $$
\end{theorem}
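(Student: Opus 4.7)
The plan is to reduce the claim to a local difference formula for single $a$-numbers and then resum over all vertex subsets. Observe first that $(G+\edge)|_I = G|_I$ whenever $\edge \not\subseteq I$, so only subsets containing $\edge$ contribute to the difference:
\[
a_i(G+\edge) - a_i(G) \;=\; \sum_{\substack{I \subseteq V(G),\ |I| = 2i\\ \edge \subseteq I}} \bigl(a(G|_I + \edge) - a(G|_I)\bigr).
\]
Thus it suffices to understand $a(H+\edge) - a(H)$ for an arbitrary induced subgraph $H = G|_I$ with $\edge \subseteq V(H)$.

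The crux of the argument will be the following local identity, for any finite simple graph $H$ and any pair $\edge \subseteq V(H)$ with $\edge \notin E(H)$:
\[
a(H + \edge) - a(H) \;=\; \sum_{J \in \EC(H+\edge) \setminus \EC(H)} |b(H|_J)| \cdot a\bigl((H+\edge)^*_J\bigr). \qquad (\star)
\]
I would prove $(\star)$ by strong induction on $|V(H)|$, using the recursive definition~\eqref{rec} of $sa$ and its multiplicativity over connected components. The base case $|V(H)| = 2$ is a direct verification. For the inductive step, when $H+\edge$ is connected of even order, expand $sa(H+\edge)$ and $sa(H)$ via~\eqref{rec}; since $sa((H+\edge)|_I) = sa(H|_I)$ whenever $\edge \not\subseteq I$, only the proper subsets $I$ with $\edge \subseteq I$ contribute to the difference, and each individual term $sa(H|_I + \edge) - sa(H|_I)$ is controlled by the inductive hypothesis. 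The remaining situations (where $H$ is disconnected or of odd order) reduce via multiplicativity of $sa$ to the connected even-order case. I expect the main obstacle to be the combinatorial reorganization of the resulting double sum: after collecting terms, one must verify that only the sets $J \in \EC(H+\edge) \setminus \EC(H)$ survive, and that each carries coefficient exactly $|b(H|_J)|$. This should require a case split according to whether $\edge$ bridges two distinct components of $H$ or lies within a single component, combined with repeated use of the multiplicativity of $b$ under disjoint union.

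With $(\star)$ in hand, the theorem follows by substitution and a summation swap. The definition of the reconnected complement immediately yields the compatibility
\[
\bigl((G+\edge)|_I\bigr)^*_J \;=\; \bigl((G+\edge)^*_J\bigr)\big|_{I \setminus J} \qquad \text{for } J \subseteq I,
\]
so applying $(\star)$ to each $H = G|_I$ produces a double sum indexed by pairs $(I, J)$ satisfying $\edge \subseteq J \subseteq I$, $|I| = 2i$, and $J \in \EC(G+\edge) \setminus \EC(G)$ (the last condition uses that $J \in \EC((G+\edge)|_I) \setminus \EC(G|_I)$ is equivalent to $J \subseteq I$ together with $J \in \EC(G+\edge) \setminus \EC(G)$). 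Swapping the order of summation and setting $K = I \setminus J$, the inner sum collapses to
\[
\sum_{\substack{K \subseteq V(G) \setminus J\\ |K| = 2i - |J|}} a\bigl((G+\edge)^*_J|_K\bigr) \;=\; a_{i - |J|/2}\bigl((G+\edge)^*_J\bigr),
\]
using that $|J|$ is automatically even whenever $J \in \EC$. This yields the claimed formula.
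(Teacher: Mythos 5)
Your outer architecture is exactly the paper's: the identity you label $(\star)$ is precisely the paper's Lemma~\ref{lemma:decom}, and your derivation of the theorem from it (splitting the sum over $I$ according to whether $\edge \subseteq I$, applying the local identity to each $G\vert_I$, using $((G+\edge)\vert_I)^\ast_J = (G+\edge)^\ast_J\vert_{I\setminus J}$, and swapping the order of summation) reproduces the paper's proof of Theorem~\ref{thm:decom} essentially verbatim and is correct. The problem is that $(\star)$ itself, which carries all the content, is not proved: you give a plan and then explicitly defer its hardest part (``I expect the main obstacle to be the combinatorial reorganization of the resulting double sum''). That reorganization --- showing that the surviving index sets are exactly $J \in \EC(H+\edge)\setminus\EC(H)$ and that each appears with coefficient exactly $\vert b(H\vert_J)\vert$ --- is the theorem's real difficulty; in the paper it occupies Lemmas~\ref{lemma:collap} and~\ref{lemma:split} together with Proposition~\ref{prop:oddeven}, and it is needed even to see that only those $J$ for which $H\vert_J$ has exactly two components, both of odd order, contribute (otherwise $b(H\vert_J)=0$).

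There is also a step in your plan for $(\star)$ that fails as stated. You claim the cases where $H$ is disconnected ``reduce via multiplicativity of $sa$ to the connected even-order case.'' When $\edge$ joins two vertices in distinct components of $H$, the graph $H$ is evaluated by multiplicativity over its components while $H+\edge$ (whose component containing $\edge$ is the merge of two components of $H$) is evaluated by the recursion~\eqref{rec}; the two quantities are not computed by parallel formulas, so the termwise subtraction $sa(H\vert_I+\edge)-sa(H\vert_I)$ on which your induction rests is not available, and no appeal to multiplicativity converts this into the connected case. This bridging case is not degenerate --- it is the source of the top term $J=V(H)$ in $(\star)$, which your connected-case expansion (where $I$ ranges over \emph{proper} subsets) cannot produce. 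Additionally, the inductive bookkeeping must be done for $sa$ rather than $a$, and the signs $(-1)^{\vert I\vert/2}$ relating the two do not factor out of the double sum without invoking Proposition~\ref{prop:oddeven}; none of this is supplied. Note that the paper sidesteps the induction entirely by using the closed-form chain-counting expression for $sa$ (Proposition~\ref{prop:anum}) and a bijective classification of chains into $\edge$-regular and $\edge$-singular ones. Your inductive route may well be completable, but as written the central lemma remains unestablished.
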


This decomposition enables us to explore two particularly intriguing problems related to the $a$-number and $a_i$-numbers.

The first problem concerns the \emph{monotonicity} of the $a$-sequences: does the $a_i$-number increase under inclusion of graphs for all $i \geq 0$?
Since the definition of the $a_i$-number involves an alternating sum over subgraphs, its monotonicity is far from obvious. 
Nevertheless, the decomposition reveals that all coefficients are non-negative, leading to the following corollary.

\begin{corollary}\label{coro:monoto}
  For all $i \geq 0$, the $i$th graph $a$-number~$a_i$ is monotonically increasing under graph inclusion; that is, if $H$ is a subgraph of~$G$, then
    $$
    a_i(H) \leq a_i(G).
    $$
    In particular, if $H$ is a spanning subgraph of $G$, then $a(H) \leq a(G)$.
\end{corollary}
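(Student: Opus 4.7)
The proof will follow almost immediately from the Theorem once one observes that every term in its decomposition is manifestly non-negative: $|b(G\vert_J)|$ is an absolute value, and each $a_{i-|J|/2}((G+\edge)^\ast_J)$ is by definition a sum of non-negative $a$-numbers. Consequently $a_i(G+\edge) \geq a_i(G)$ for any single edge addition, and the plan is to bootstrap this edge-monotonicity into full subgraph-monotonicity after a brief reduction to the spanning case. The main (non-)obstacle is purely bookkeeping: checking that adjoining isolated vertices does not change $a_i$.

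First I would reduce to the spanning subgraph situation. Given $H \subseteq G$, form the auxiliary graph $H'$ on vertex set $V(G)$ obtained from $H$ by adjoining each vertex of $V(G) \setminus V(H)$ as an isolated vertex; then $H'$ is a spanning subgraph of $G$. I claim $a_i(H') = a_i(H)$ for all $i \geq 0$: split the defining sum $a_i(H') = \sum_{|I|=2i} a(H'\vert_I)$ according to whether $I$ contains a newly added isolated vertex, and observe that any such $I$ has some $v \in I$ that is isolated in $H'\vert_I$, so $\{v\}$ is an odd-order component of $H'\vert_I$, forcing $sa(H'\vert_I) = 0$ by multiplicativity; the remaining terms run over $I \subseteq V(H)$ and exactly reproduce $a_i(H)$.

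Next, enumerate the edges of $E(G) \setminus E(H')$ as $\edge_1, \dots, \edge_k$ in any order and build a chain $H' = G_0 \subseteq G_1 \subseteq \cdots \subseteq G_k = G$ with $G_j = G_{j-1} + \edge_j$. Applying the Theorem at each step gives $a_i(G_j) \geq a_i(G_{j-1})$, and chaining these inequalities yields
$$a_i(H) = a_i(H') \leq a_i(G_1) \leq \cdots \leq a_i(G_k) = a_i(G),$$
as desired.

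For the "in particular" clause, let $n = |V(G)| = |V(H)|$. If $n$ is even, the only subset of $V(G)$ of cardinality $n$ is $V(G)$ itself, so $a_{n/2}(G) = a(G)$ and $a_{n/2}(H) = a(H)$; the spanning-subgraph inequality specialized at $i = n/2$ yields $a(H) \leq a(G)$. If $n$ is odd, every graph on $n$ vertices has at least one connected component of odd order, so $a(H) = a(G) = 0$ and the inequality is trivial.
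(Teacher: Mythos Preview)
Your proof is correct and follows essentially the same route as the paper: reduce to the spanning case by adjoining isolated vertices (noting this leaves every $a_i$ unchanged), then apply the decomposition theorem edge by edge, using that all summands on the right-hand side are non-negative. You simply spell out in more detail the two points the paper treats as ``clear'' and ``immediate'' (why isolated vertices kill $a$-numbers, and the chaining over successive edge additions), and you additionally make explicit the $i = n/2$ specialization for the ``in particular'' clause, which the paper leaves unstated.
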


As an immediate application of the monotonicity theorem, we obtain the first inequality in the following corollary, by comparing a graph with a spanning tree and the complete graph on the same vertex set.
The additional part is more subtle and the proof follows again the decomposition theorem, and we refer to Theorem~\ref{thm:bounds_a_i_numbers}.
\begin{corollary}\label{coro:upperlower}
  Let~$G$ be a connected graph on~$n$ vertices.
  Let~$K_n$ be the complete graph on~$n$ vertices, and let~$T$ be a spanning tree of~$G$.
  Then, for each~$i$,
  ~$$
    a_i(T) \le a_i(G) \le a_i(K_n) = \binom{n}{2i} A_{2i},
  $$
  where $A_{k}$ is the $k$th Euler zigzag number.
  Moreover, 
  $$
    \frac{n-2i+1}{n-i+1} \binom{n}{i}=a_i(P_n) \leq a_i (T) \leq a_i (S_n) = \binom{n-1}{2i-1} A_{2i-1}.
  $$
\end{corollary}

The second problem deals with the \emph{unimodality} of the $a_i$-numbers as a sequence in $i$. 
One of the most well-known conjectures related to unimodality is the $g$-conjecture, originally posed by McMullen~\cite{McMullen1971}, which characterizes the $h$-vectors of simplicial polytopes. 
It was resolved by Stanley~\cite{Stanley1980} in the necessity direction, by constructing a projective toric variety whose cohomology ring reflects the $h$-vector, and applying the Hard Lefschetz theorem.
This approach laid the foundation for the field of \emph{toric topology}, which seeks to understand topological invariants via associated combinatorial structures.
This unimodality has also been established for more general complex smooth compact toric varieties~\cite{Adiprasito2018,Karu-Xiao2023}.

However, this does not generally hold for the real loci of toric varieties. 
In our setting, the $a$-vector of a graph coincides with the rational Betti numbers of a corresponding real toric variety.
From the toric topological perspective, it is natural to ask whether these sequences exhibit unimodality, or more broadly, to seek necessary and sufficient conditions for a sequence of positive integers to arise as the $a$-vector of a finite simple graph, as formulated in Problem~\ref{prob:g-conjecture}.
Interestingly, for many graphs $G$, the real toric variety $X^\R_G$ appears to exhibit unimodal Betti numbers. 
This phenomenon was posed as an open question in the authors' previous work~\cite{Choi-Yoon2026}.
Our decomposition theorem allows us to establish unimodality for several concrete subclasses.
We record one such consequence below. 
See Corollaries~\ref{cor:Ham} and~\ref{cor:universal_vertex} for the details of the proof.

\begin{corollary}
    The $a$-sequence of a finite simple graph $G$ is unimodal if $G$ has a Hamiltonian circuit or a universal vertex.
\end{corollary}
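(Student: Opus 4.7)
The plan is to handle the two cases separately, in each case proceeding by strong induction on $|V(G)|$ together with an inner induction on the number of ``extra'' edges beyond a canonical base graph, using the decomposition theorem as the inductive engine.

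For the universal-vertex case, the base case is $G = K_{1,n-1}$, the star graph; its $a$-sequence can be computed directly from the recursion and is unimodal. For the inductive step, assume the claim for all graphs on fewer vertices and for graphs on $n$ vertices having a universal vertex and fewer edges. Write $G = (G-\edge)+\edge$ for some edge $\edge$ joining two non-universal vertices, and apply the decomposition theorem to get
$$
a_i(G)=a_i(G-\edge)+\sum_{J\in \EC(G)\setminus \EC(G-\edge)} \bigl|b((G-\edge)\vert_J)\bigr|\cdot a_{i-|J|/2}(G^\ast_J).
$$
I first verify that every $J$ appearing in the sum avoids the universal vertex $v$: if $v\in J$ and $|J|\ge 2$, both $(G-\edge)\vert_J$ and $G\vert_J$ are connected because $v$ is universal, forcing $|J|$ to be simultaneously odd (from $J\notin \EC(G-\edge)$) and even (from $J\in \EC(G)$); while $J=\{v\}$ is excluded because $(G-\edge)\vert_J=G\vert_J$. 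Hence $v$ remains a universal vertex of $G^\ast_J$, and since $|V(G^\ast_J)|<|V(G)|$ the outer inductive hypothesis gives that $a_\bullet(G^\ast_J)$ is unimodal. Together with the inner inductive hypothesis applied to $G-\edge$, every sequence on the right-hand side is unimodal.

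For the Hamiltonian-circuit case, the base is $G = C_n$, whose $a$-sequence coincides with the entries of the half Pascal triangle and is unimodal. The inductive step proceeds analogously, adding a chord $\edge$ to a graph $G-\edge$ that already contains a Hamiltonian cycle. The required observation is that each $G^\ast_J$ retains a Hamiltonian cycle: traversing the Hamiltonian circuit of $G$, each maximal arc whose internal vertices lie in $J$ collapses to a single edge of $G^\ast_J$ by the definition of the reconnected complement, producing a Hamiltonian cycle on $V(G)\setminus J$. Since $|V(G^\ast_J)|<|V(G)|$, the outer inductive hypothesis applies.

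In both cases, the argument reduces to showing that a non-negative combination
$$
a_i(G-\edge)+\sum_{J} c_J\, a_{i-|J|/2}(G^\ast_J)
$$
of shifted unimodal sequences is itself unimodal. Since sums of unimodal sequences need not be unimodal in general, the crux -- and the main obstacle I anticipate -- is to prove that the peaks of all contributing terms cluster around a common index, and that the combined sum is monotone on each side of this common peak. I would attempt this by tracking the peak index of $a_\bullet(G^\ast_J)$ as a function of $|V(G^\ast_J)|=n-|J|$ together with the shift $|J|/2$, showing that each effective peak lies at or adjacent to the peak of $a_i(G-\edge)$. The structural hypotheses (universal vertex or Hamiltonian circuit) are crucial at this step: they constrain both the sizes of the $J$ that can appear and the topology of each $G^\ast_J$, opening a path to the required peak control.
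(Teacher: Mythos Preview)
Your structural setup matches the paper's: induction on $|V(G)|$ via the decomposition theorem, plus closure of each class under reconnected complements (your arguments that $v\notin J$ in the universal-vertex step and that $G^\ast_J$ inherits a Hamiltonian circuit are both correct, and are essentially the content of Theorem~\ref{thm:col}). But you explicitly leave the decisive step open, and that is a genuine gap.

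The obstacle you name --- that a nonnegative combination of shifted unimodal sequences need not be unimodal --- is real, and ``tracking peak indices'' in the hope that they cluster is not a proof. The paper resolves this by \emph{strengthening the inductive hypothesis}: rather than unimodality, one proves the \emph{tail-peaked condition}, meaning the $a$-sequence is nondecreasing all the way up to its second-to-last nonzero term (Theorem~\ref{thm:unimodal}). This stronger property \emph{is} preserved by the decomposition: if $G$ has $m$ vertices, then each $G^\ast_J$ has $m-|J|$ vertices, its tail-peaked $a$-sequence is nondecreasing up to an index near $\lfloor(m-|J|)/2\rfloor-1$, and after the shift by $|J|/2$ every summand is nondecreasing up to the \emph{same} index near $\lfloor m/2\rfloor-1$. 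A sum of sequences that are all nondecreasing on $[0,k]$ is nondecreasing on $[0,k]$; this is exactly the peak alignment you were reaching for, made precise. Mere unimodality has no such stability, so your induction as written cannot close.

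A second, smaller gap: the star base case is not as routine as ``computed directly from the recursion'' suggests. Verifying that $K_{1,n-1}$ is tail-peaked requires an analytic inequality on tangent numbers via Bernoulli-number bounds (Lemma~\ref{lemma:star1}), and the paper's full argument for stars (Lemma~\ref{lemma:star2}) actually invokes the already-established Hamiltonian case to handle the reconnected complements that arise --- so the two halves of the corollary are not independent in the paper's proof.
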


It is also worth noting that these $a$-sequences need not be log-concave as shown in Remark~\ref{rem:non_log_concave_exam}. 
While topological spaces with log-concave Betti number sequences have received considerable attention (see, e.g., \cite{Huh-Katz2012,Adiprasito-Huh-Katz2018,Gui-Xiong2024}), much less is known about those that exhibit unimodality without log-concavity.
Although there are~\cite{Cairns2009,Migliore-Zanello2018} several known examples of Betti number sequences of chain complexes that are unimodal but not log-concave, such phenomena have been little studied for Betti number sequences associated with topological spaces.
Our results contribute to this direction by exhibiting a broad class of such topological spaces.

The paper is organized as follows.
In Section~\ref{sec:pre}, we introduce preliminary concepts.
In particular, we introduce the graph associahedra along with their associated complex and real toric varieties, as well as the notion of reconnected complements and certain graph classes that are closed under reconnected complements.
Section~\ref{sec:graph} presents properties of graph $a$-numbers and the motivation behind their decomposition.
In Section~\ref{sec:decom}, we prove a decomposition theorem for the $a$-number and, as an application, demonstrate its monotonicity.
In Section~\ref{sec:bounds}, we prove Corollary~\ref{coro:upperlower}, and Section~\ref{sec:unimodal} investigates the unimodality of the $a$-sequences.
In Subsection~\ref{subsec:blow-ups}, we study blow-ups of toric varieties, focusing on their real loci.
We raise several natural questions and give partial answers concerning how these blow-ups influence the topology of the corresponding real toric varieties.
In Subsection~\ref{subsec:Lefschetz}, we discuss why, in general, the cohomology of real toric varieties does not admit an operator analogous to the Lefschetz operator, even for those arising from graphs.

\section{Reconnected complements}\label{sec:pre}

Let $S$ be a finite set.
A finite collection $\B$ of nonempty subsets of~$S$ is called a \emph{building set} on~$S$ if it satisfies the following conditions:
\begin{enumerate}
    \item $\{i\} \in \B$ for every $i \in S$, and
    \item if $I, J \in \B$ and $I \cap J \neq \emptyset$, then $I \cup J \in \B$.
\end{enumerate}
An inclusion-maximal element of~$\B$ is called a \emph{connected component}, and~$\B$ is said to be \emph{connected} if~$S \in \B$.  
For a building set~$\B$ on~$[n+1] = \{1, \ldots, n+1\}$, the \emph{nestohedron}~$P_{\B}$ associated with~$\B$ is defined as the Minkowski sum of simplices
$$
    P_\B = \sum_{I \in \B} \conv(I),
$$
where~$\conv(I)$ denotes the convex hull of the $i$th standard basis vectors in~$\R^{n+1}$ for all~$i \in I$.

The nestohedron $P_\B$ associated with a building set~$\B$ naturally lies in~$\R^{n+1}$, but is contained in the hyperplane $\sum_{i=1}^{n+1} x_i = |\B|$. 
Modulo the line spanned by the vector~$(1,1,\ldots,1)$, this hyperplane can be identified with~$\R^n$ equipped with the standard lattice~$\Z^n$.
It is well known~\cite{Postnikov2009} that the image of any nestohedron under this projection is a \emph{Delzant polytope} in~$\R^n$, meaning that at each vertex~$v$, the outward normal vectors to the facets containing~$v$ can be chosen to form an integral basis of~$\Z^n$.
By the fundamental theorem of toric geometry, $P_\B$ induces a smooth projective toric variety~$X_{P_\B}$, called the \emph{toric variety associated with~$\B$}.
The fixed point set of~$X_{P_\B}$ under the canonical involution induced by a complex conjugation is denoted by~$X^\R_{P_\B}$, and is referred to as the \emph{real toric variety associated with~$\B$}.

A building set~$\B$ on a finite set~$S$ is called \emph{graphical} if there exists a finite simple graph~$G$ on~$S$ such that~$I \in \B$ if and only if the induced subgraph~$G|_I$ on~$I$ is connected.  
In this case, the building set~$\B$ is denoted by~$\B(G)$.
Note that a building set~$\B(G)$ is connected if and only if~$G$ is connected.

Given a finite simple graph~$G$, the associated nestohedron~$P_{P_\B(G)}$ is simply denoted by~$P_G$ and is called a \emph{graph associahedron}, which is a subclass of nestohedra and has been extensively studied in the literature~\cite{Carr-Devadoss2006, Zel2006, Postnikov2008, Devadoss2009, Choi-Park2015, Rodrigo-Jensen-Dhruv2016, Choi-Hwang2023}.
The associated complex and real toric varieties, denoted by~$X_G$ and~$X^\R_G$ respectively, are simply called the complex and real toric varieties associated with~$G$.

\begin{definition}
    For each $I \subseteq V(G)$, define the graph~$G^\ast_I$ whose vertex set is~$V(G) \setminus I$ and whose edge set consists of unordered pairs~$\{a,b\} \subset V(G) \setminus I$ such that there exists a path from~$a$ to~$b$ in~$G|_{I \cup \{a,b\}}$.  
    We call~$G^\ast_I$ the \emph{reconnected complement} of~$I$ in~$G$.
\end{definition}

Reconnected complements were studied in~\cite{Carr-Devadoss2006,Buchstaber-Panov2015,Vladimir-Adam-Denis_2024}.
\begin{proposition}\cite{Carr-Devadoss2006}\label{prop:carr}
    There is a one-to-one correspondence between the facets of~$P_G$ and
    $$
    \cI = \{I \subsetneq V(G) \colon I \neq \emptyset, G\vert_I \text{ is connected}\}.
    $$
    Moreover, the facet corresponding to~$I \in \cI$ is combinatorially equivalent to the product~$P_{G\vert_I} \times P_{G_I^\ast}$.
\end{proposition}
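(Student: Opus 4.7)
The plan is to deduce the proposition from the general theory of nestohedra, specialized to graphical building sets. The two assertions split naturally: the bijection between facets and $\cI$ follows from Postnikov's description of facets of a Minkowski sum of coordinate simplices, while the product decomposition requires interpreting the abstract contraction of a building set in concrete graph-theoretic terms.

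For the first assertion, I would use the standard fact that for any connected building set $\B$ on an $(n+1)$-element ground set $S$, the $n$-dimensional nestohedron $P_\B$ has facets in bijection with $\B \setminus \{S\}$. This can be verified directly from the Minkowski sum definition: for each $I \in \B$, the linear functional $\sum_{i \in I} x_i$ is maximized on the face obtained by replacing each summand $\conv(J)$ with $\conv(J \cap I)$ when $J \not\subseteq I$ and leaving it unchanged otherwise, and a dimension count shows this face is a facet exactly when $I \in \B \setminus \{S\}$. Specializing to $\B = \B(G)$ (and handling disconnected $G$ componentwise via the product decomposition of $P_G$), the set $\B(G) \setminus \{V(G)\}$ coincides with $\cI$ by the definition of a graphical building set.

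For the product structure, I would invoke the general fact that for $I \in \B \setminus \{S\}$, the facet $F_I$ of $P_\B$ is combinatorially equivalent to $P_{\B|_I} \times P_{\B/I}$, where $\B|_I = \{J \in \B : J \subseteq I\}$ is the restricted building set and $\B/I$ is the contracted building set on $S \setminus I$. This can be extracted by examining nested sets containing $I$: any such nested set splits uniquely into a nested set on $I$ (for $\B|_I$) and a nested set on $S \setminus I$ (for $\B/I$), and the resulting face lattices combine to give the product. In the graphical case, $\B(G)|_I = \B(G|_I)$ is immediate, so the substantive step is to identify $\B(G)/I$ with $\B(G^\ast_I)$, that is, to show that a subset $J \subseteq V(G) \setminus I$ belongs to the contracted building set if and only if $G^\ast_I|_J$ is connected.

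The hard part will be the final graph-theoretic equivalence, which reduces to showing that $G|_{I \cup J}$ is connected if and only if $G^\ast_I|_J$ is connected. One direction is straightforward: given an edge $\{w,w'\}$ in $G^\ast_I|_J$, there is by definition a path from $w$ to $w'$ in $G|_{I \cup \{w,w'\}} \subseteq G|_{I \cup J}$, and concatenating such paths along any walk in $G^\ast_I|_J$ yields a walk in $G|_{I \cup J}$ between any two vertices of $J$. For the converse, given a path $P$ in $G|_{I \cup J}$ between $a, b \in J$, I would list the successive vertices of $J$ visited by $P$; each consecutive pair is joined by a sub-path of $P$ whose internal vertices lie in $I$, and this sub-path witnesses an edge of $G^\ast_I$, so piecing these edges together yields a walk in $G^\ast_I|_J$ from $a$ to $b$. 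The main obstacle is keeping the two notions of ``path through $I$'' carefully separated at the level of definitions; once that is done, the equivalence is an elementary chase.
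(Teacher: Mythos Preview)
The paper does not give its own proof of this proposition; it is simply attributed to Carr--Devadoss and stated without argument. So there is nothing to compare your approach against at the level of technique, and your plan to derive it from the general nestohedron theory is a perfectly reasonable way to supply a proof.

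That said, there is a genuine slip in your reduction step. You assert that identifying $\B(G)/I$ with $\B(G^\ast_I)$ ``reduces to showing that $G|_{I\cup J}$ is connected if and only if $G^\ast_I|_J$ is connected.'' This biconditional is false. Take $J\subseteq V(G)\setminus I$ with $G|_J$ connected but with no edge of $G$ joining $J$ to $I$ (for instance $J$ a singleton not adjacent to $I$): then $G|_{I\cup J}$ has the two components $G|_I$ and $G|_J$, while $G^\ast_I|_J=G|_J$ is connected. The underlying issue is that the contracted building set is not $\{J: G|_{I\cup J}\text{ connected}\}$; it is
\[
\B(G)/I=\{J\subseteq V(G)\setminus I:\ G|_J\text{ connected}\}\ \cup\ \{J\subseteq V(G)\setminus I:\ G|_{I\cup J}\text{ connected}\},
\]
coming from the two types of building-set elements compatible with $I$ in a nested set (those disjoint from $I$ and those containing $I$). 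Your forward argument in fact only shows that all vertices of $J$ lie in the same component of $G|_{I\cup J}$; using that $G|_I$ is connected, this yields exactly the disjunction above, not connectivity of $G|_{I\cup J}$. Once you state the correct target, the missing case is immediate (if $G|_J$ is connected and has no edge to $I$ then $G^\ast_I|_J=G|_J$), and your path-splitting argument handles the other case. So the fix is small, but as written the reduction is misstated and one direction of your proof does not establish what you claim.
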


It is straightforward to verify that~$G^\ast_{\emptyset} = G$ and~$G^\ast_{V(G)}$ is the null graph.
Moreover, note that the set of edges~$E(G^\ast_{I})$ of $G^\ast_{I}$ can be classified into exactly two types.
One type consists of edges originally from~$G$, while the other consists of edges that are not in~$G$ but are added to~$G^\ast_I$ since they appear as paths in~$G_{I \cup \{\edge\}}$.

\begin{example}
    Let~$G$ be the graph obtained from a $6$-cycle graph with vertex set $\{1, 2, 3, 4, 5, 6\}$ in cyclic order by adding an edge~$\{1,4\}$.
    Let us consider the reconnected complement of~$I = \{1,2\}$ in~$G$.  
The edges~$\{3,4\}$, $\{4,5\}$, and $\{5,6\}$ remain as edges in~$G^\ast_{\{1,2\}}$, since they are already present in~$G$ and their endpoints are unaffected by the removal.  
In addition, paths exist from~$3$ to~$6$ in the induced subgraph~$G^\ast_{\{1,2,3,6\}}$ and from~$4$ to~$6$ in~$G^\ast_{\{1,2, 4,6\}}$.  
In contrast, there is no path from~$3$ to~$5$ in~$G|_{\{1,2, 3,5\}}$.  
Hence,  
$$
E(G^\ast_{\{1,2\}}) = \big\{\{3,4\}, \{4,5\}, \{5,6\}, \{3,6\}, \{4,6\}\big\}.
$$  
See Figure~\ref{figure:rec} for additional examples of reconnected complements, including the one above.
\begin{figure}
\centering
\begin{tikzpicture}[scale=0.7]
  \node (1) at (0, 3) {$1$};
  \node (2) at (-2, 2) {$2$};
  \node (3) at (-2, 0) {$3$};
  \node (4) at (0, -1) {$4$};
  \node (5) at (2, 0) {$5$};
  \node (6) at (2, 2) {$6$};
  
  \draw (1) -- (2);
  \draw (2) -- (3);
  \draw (3) -- (4);
  \draw (4) -- (5);
  \draw (5) -- (6);
  \draw (6) -- (1);
  \draw (4) -- (1);

  \node (3a) at (4, 0) {$3$};
  \node (4a) at (6, -1) {$4$};
  \node (5a) at (8, 0) {$5$};
  \node (6a) at (8, 2) {$6$};
  
  \draw (3a) -- (4a);
  \draw (4a) -- (5a);
  \draw (5a) -- (6a);
  \draw (6a) -- (3a);
  \draw (6a) -- (4a);

  \node (3b) at (10, 0) {$3$};
  \node (5b) at (14, 0) {$5$};
  \node (6b) at (14, 2) {$6$};
  
  \draw (3b) -- (5b);
  \draw (3b) -- (6b);
  \draw (5b) -- (6b);
  
  \node (3c) at (16, 0) {$3$};
  \node (6c) at (20, 2) {$6$};
  \draw (3c) -- (6c);
  
  \node at (0, -2) {$G$};
  \node at (6, -2) {$G^\ast_{\{1,2\}}$};
  \node at (12, -2) {$G^\ast_{ \{1,2,4\}}$};
  \node at (18, -2) {$G^\ast_{ \{1,2,4,5\}}$};
\end{tikzpicture}
  \caption{Reconnected complements}\label{figure:rec}
\end{figure}
\end{example}

In the rest of this section, we introduce new properties of reconnected complements, which plays a key role in proving the main result of this paper.
Basic observations are as follows. For any subgraph~$H$ of~$G$ and any subset~$I \subseteq V(H)$, the reconnected complement~$H^\ast_{I}$ is a subgraph of~$G^\ast_{I}$.
Moreover, for all~$I \subseteq J \subseteq V(G)$,
\begin{equation}\label{eq:inter}
  (G\vert_{J})^\ast_{I} = G^\ast_{I}\vert_{J \setminus I}.
\end{equation}

\begin{lemma}\label{lemma:rec}
    Let~$I$ be a proper subset of~$V(G)$.
    For each vertex~$v$ in~$V(G) \setminus I$,
    $$
        (G^\ast_I)^\ast_{\{v\}} = G^\ast_{I \cup \{v\}}.
    $$
\end{lemma}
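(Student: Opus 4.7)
The plan is to establish the claimed equality of graphs by showing that the two sides share the same vertex set and the same edge set. The vertex sets agree immediately, since $V((G^\ast_I)^\ast_{\{v\}}) = V(G^\ast_I) \setminus \{v\} = V(G) \setminus (I \cup \{v\}) = V(G^\ast_{I \cup \{v\}})$. The substantive content is therefore to show, for any two distinct $a,b \in V(G) \setminus (I \cup \{v\})$, that the following are equivalent: (i) there exists a path from $a$ to $b$ in $G\vert_{I \cup \{v, a, b\}}$; (ii) there exists a path from $a$ to $b$ in $(G^\ast_I)\vert_{\{v, a, b\}}$. By the definition of the reconnected complement, (i) is precisely the condition that $\{a,b\}$ is an edge of $G^\ast_{I \cup \{v\}}$, while (ii) is precisely the condition that $\{a,b\}$ is an edge of $(G^\ast_I)^\ast_{\{v\}}$; hence the equivalence of (i) and (ii) yields the lemma.

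For the forward direction, I would take a path $a = u_0, u_1, \ldots, u_k = b$ in $G\vert_{I \cup \{v, a, b\}}$ and extract the subsequence $w_0, w_1, \ldots, w_m$ of those $u_j$ that lie in $\{v, a, b\}$, which automatically includes $u_0 = a$ and $u_k = b$. After collapsing any immediately repeated entries, each segment of the original path between $w_j$ and $w_{j+1}$ has its internal vertices in $I$, so it is a path in $G\vert_{I \cup \{w_j, w_{j+1}\}}$; by the definition of the reconnected complement this means $\{w_j, w_{j+1}\}$ is an edge of $G^\ast_I$. Stringing these edges together produces a walk in $(G^\ast_I)\vert_{\{v, a, b\}}$ from $a$ to $b$, and any such walk contains a path.

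The reverse direction runs in the opposite order. Given a path $a = w_0, w_1, \ldots, w_m = b$ in $(G^\ast_I)\vert_{\{v, a, b\}}$, for each edge $\{w_j, w_{j+1}\}$ there is an actual path in $G\vert_{I \cup \{w_j, w_{j+1}\}}$ by the definition of $G^\ast_I$; concatenating these paths yields a walk in $G\vert_{I \cup \{v, a, b\}}$ from $a$ to $b$, which contains a path. The main difficulty is not conceptual but rather bookkeeping: one must treat degenerate situations carefully, such as paths of length zero, consecutive equal $w_j$'s produced by the extraction step, or the case in which $\{a,b\}$ is already an edge of $G^\ast_I$, so that the extraction-and-concatenation procedure is verifiably well-defined. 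The identity~\eqref{eq:inter} helps to keep the various induced subgraphs and restrictions organized along the way.
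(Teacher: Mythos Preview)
Your proposal is correct and follows essentially the same approach as the paper: both verify that the vertex sets coincide and then establish the equivalence of the edge conditions by passing back and forth between paths in $G\vert_{I\cup\{v,a,b\}}$ and paths in $(G^\ast_I)\vert_{\{v,a,b\}}$ via concatenation and segmentation. The only difference is presentational: since $(G^\ast_I)\vert_{\{v,a,b\}}$ has just three vertices, the paper replaces your general extraction-of-the-subsequence argument by the explicit two-case split (either $\{a,b\}$ is already an edge of $G^\ast_I$, or the path goes through $v$), which amounts to the same thing.
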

\begin{proof}
    Both graphs have the same vertex set, namely~$V(G) \setminus (I \cup \{v\})$.
    Hence, it remains to verify that their edge sets are equal.
    
    Assume that~$\{a,b\}$ is an edge of~$(G^\ast_I)^\ast_{\{v\}}$.
    Then there exists a path from~$a$ to~$b$ in the induced subgraph~$G^\ast_I\vert_{\{v,a,b\}}$ of~$G^\ast_I$.  
    We distinguish two cases: either~$\{a,b\}$ is an edge of~$G^\ast_I$, or it is not.
    To begin, we consider the case where~$\{a,b\}$ is an edge of~$G^\ast_I$.
    Then there exists a path from~$a$ to~$b$ in~$G\vert_{I \cup \{a,b\}}$, and hence also in~$G\vert_{(I \cup \{v\}) \cup \{a,b\}}$.
    It follows that~$\{a,b\}$ is an edge of~$G^\ast_{I \cup \{v\}}$.
    
    Now, consider the case where~$\{a,b\}$ is not an edge of~$G^\ast_I$.  
    Since~$\{a,b\}$ is an edge of~$(G^\ast_I)^\ast_{\{v\}}$, there must be edges~$\{a,v\}$ and~$\{v,b\}$ in~$G^\ast_I$.  
    This implies the existence of a path from~$a$ to~$v$ in~$G\vert_{I \cup \{a,v\}}$ and a path from~$v$ to~$b$ in~$G\vert_{I \cup \{v,b\}}$.  
    By concatenating these two paths, we conclude that~$\{a,b\}$ is an edge of~$G^\ast_{I \cup \{v\}}$.
    
    Conversely, we assume that~$\{a,b\}$ is an edge of~$G^\ast_{I \cup \{v\}}$.
    Then there exists a path~$P$ from~$a$ to~$b$ in~$G\vert_{(I \cup \{v\}) \cup \{a,b\}}$.
    If the path~$P$ lies entirely within~$G\vert_{I \cup \{a,b\}}$, then~$\{a,b\}$ is an edge of~$G^\ast_I$, and consequently also an edge of~$(G^\ast_I)^\ast_{\{v\}}$.
    If not, the path~$P$ must pass through~$v$.
    In this case, $\{a,v\}$ and~$\{v,b\}$ are edges in~$G^\ast_I$.
    Therefore, $\{a,b\}$ is an edge in~$(G^\ast_I)^\ast_{\{v\}}$.
\end{proof}

A graph class~$\cG$ is said to be \emph{closed under reconnected complements} if, for every~$G \in \cG$ and every~$I \subseteq V(G)$, the reconnected complement~$G^\ast_I$ also belongs to~$\cG$.
When a graph class~$\cG$ is closed under reconnected complements, a graph~$G \in \cG$ is said to be \emph{minimal} in~$\cG$ if there is no proper spanning subgraph~$H$ of~$G$ such that~$H \in \cG$.

A \emph{Hamiltonian graph} is a graph that contains a cycle passing through every vertex exactly once. Such a cycle is called a \emph{Hamiltonian circuit}. 
A \emph{universal vertex} in a graph is a vertex that is adjacent to every other vertex.

\begin{theorem}\label{thm:col}
    The following graph classes are closed under reconnected complements, and their minimal graphs are as follows.
    \begin{enumerate}
        \item\label{thm:col,conn} The class of connected graphs, whose minimal graphs are trees.
        \item\label{thm:col,Ham} The class of Hamiltonian graphs, together with the null, single-vertex, and single-edge graphs. The minimal graphs in this class are the null graph, the single-vertex graph, the single-edge graph, and cycle graphs.
        \item\label{thm:col,uni} The class of graphs with a universal vertex, whose minimal graphs are star graphs.
    \end{enumerate}
    In particular, for a graph~$G$ with a universal vertex~$v$, if~$I \subseteq V(G)$ contains~$v$, then~$G^\ast_I$ is a complete graph, while if~$v \notin I$, then~$G^\ast_I$ is a star graph.
\end{theorem}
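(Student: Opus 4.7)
The plan is to prove each of the three parts by the same overall scheme: identify a structural witness in $G$ (respectively a spanning tree, a Hamiltonian circuit, or a universal vertex), transfer it to $G^\ast_I$ to establish closure, and then characterize the minimal members by a single-edge deletion argument. The technical tool I would isolate at the start is a path-compression lemma: given any walk $a=w_0,w_1,\ldots,w_k=b$ in $G$ with $a,b\notin I$, the subsequence $u_0=a,u_1,\ldots,u_m=b$ of vertices with $w_i\notin I$ has the property that each gap $u_j\to u_{j+1}$ is a sub-walk whose interior lies entirely in $I$; hence $\{u_j,u_{j+1}\}$ is an edge of $G^\ast_I$ by definition, and $u_0,\ldots,u_m$ is a walk in $G^\ast_I$.

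Applying the lemma to a spanning tree of a connected $G$ yields a connected spanning subgraph of $G^\ast_I$, proving part \eqref{thm:col,conn}. For part \eqref{thm:col,Ham}, applying it to a Hamiltonian circuit $v_1v_2\cdots v_nv_1$ produces a closed cyclic sequence on $V(G)\setminus I$; when its length $m=|V(G)\setminus I|$ is at least $3$, this is a Hamiltonian circuit of $G^\ast_I$, while $m\in\{0,1,2\}$ recovers exactly the null, single-vertex, and single-edge graphs appearing in the exceptional list. For part \eqref{thm:col,uni}, closure is immediate: if $v\in I$ then for any $a,b\in V(G)\setminus I$ the length-two walk $a{-}v{-}b$ lies in $G\vert_{I\cup\{a,b\}}$, so $\{a,b\}\in E(G^\ast_I)$ and $G^\ast_I$ is complete; if $v\notin I$ then $v$ is adjacent to every other vertex of $V(G)\setminus I$ already in $G$, hence also in $G^\ast_I$, so $v$ stays universal.

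The minimal classifications follow from a uniform deletion argument. A connected graph with a cycle admits a proper connected spanning subgraph by deleting a cycle edge, so the minimal connected graphs are trees. A Hamiltonian graph on at least three vertices that is not a cycle has a Hamiltonian circuit together with a chord, whose removal preserves Hamiltonicity, so the minimal Hamiltonian graphs are exactly cycles, together with the three trivial exceptions. A graph with universal vertex $v$ containing an edge $\{a,b\}$ with $a,b\neq v$ admits deletion of that edge while preserving the universality of $v$, whereas in a star no edge can be removed without destroying the sole universal vertex, so the minimal graphs are stars. The concluding dichotomy for graphs with a universal vertex is then read off in the minimal case: for a star $G$ with center $v$, when $v\notin I$ no two distinct surviving leaves are joined by a path in $G$ avoiding $v$, so no new edges are produced and $G^\ast_I$ is again a star, while the first half already gives completeness when $v\in I$.

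I expect the main obstacle to be the careful case analysis in part \eqref{thm:col,Ham} for small $m$: one must check that the cyclic-compression argument neither produces spurious multi-edges (when the same unordered pair $\{u_j,u_{j+1}\}$ appears twice around the cycle) nor a loop, and that each degeneration $m=0,1,2$ genuinely matches the listed exceptional class rather than some larger graph arising from unexpected new edges created by paths through $I$.
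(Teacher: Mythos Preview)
Your plan is correct. The paper's proof differs mainly in organization: it first invokes Lemma~\ref{lemma:rec} to reduce closure under arbitrary reconnected complements to closure under removal of a single vertex, and then observes in one line that deleting a vertex~$v$ from a spanning path or cycle and joining its two neighbors yields a spanning path or cycle on the remaining vertices; the universal-vertex case uses the same length-two-path observation you give. Your path-compression lemma performs this contraction for arbitrary~$I$ in one shot, bypassing the singleton reduction. Both routes are short; the paper's has the minor advantage that Lemma~\ref{lemma:rec} is reused later (e.g.\ in the proof of Lemma~\ref{lemma:collap}), while yours is self-contained and also spells out the minimality claims, which the paper leaves implicit.

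The obstacle you flag in part~\eqref{thm:col,Ham} is not real. The $u_j$ are distinct vertices of the original Hamiltonian circuit, so no loops arise, and for $m\ge 3$ a coincidence $\{u_j,u_{j+1}\}=\{u_k,u_{k+1}\}$ with $j\ne k$ forces $m\mid 2$; for $m\le 2$ the reconnected complement has at most two vertices and is therefore automatically one of the listed exceptions once you have exhibited the edge when $m=2$. One genuine wrinkle worth recording: the theorem's ``in particular'' clause, read literally for an arbitrary~$G$ with universal vertex~$v\notin I$, asserts that $G^\ast_I$ is a star, which fails already for $G=K_4$ and a singleton~$I$ not containing~$v$ (one gets $K_3$). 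What both your argument and the paper's actually establish is that $v$ remains universal in~$G^\ast_I$; your decision to verify the literal star conclusion only in the minimal case is exactly right.
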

\begin{proof}
    From Lemma~\ref{lemma:rec}, to establish the closedness under reconnected complements, it suffices to show that they are closed under graph-collapse at a singleton.
    
    Let~$G$ be a finite simple graph.
    Assume that~$P$ and~$C$ are subgraphs of~$G$ that are, respectively, a path and a cycle, both containing a vertex~$v$.
    Then the neighbors of~$v$ in~$P$ or~$C$ must be adjacent in the reconnected complement~$G^\ast_{\{v\}}$ of~$\{v\}$ in~$G$, which implies that~$P^\ast_{\{v\}}$ and~$C^\ast_{\{v\}}$ remain subgraphs of~$G^\ast_{\{v\}}$.
    Therefore, the closure under graph-collapse is verified for cases~\eqref{thm:col,conn} and~\eqref{thm:col,Ham}.
        
    Now consider the case where~$G$ has a universal vertex~$v$.
    For every pair~$(a, b)$ of distinct vertices in~$V(G) \setminus \{v\}$, there exists a path from~$a$ to~$b$ in the induced subgraph~$G_{\{v,a,b\}}$.
    Hence, in~$G^\ast_{\{v\}}$, all such pairs are adjacent, and~$G^\ast_{\{v\}}$ is a complete graph, as desired.
\end{proof}

\section{Graph $a$-numbers}\label{sec:graph}
Let $G$ be a finite simple graph with vertex set~$V(G)$.
The \emph{signed $a$-number}~$sa(G)$ of~$G$ is defined recursively by setting $sa(\emptyset)=1$, and, for any non-empty graph $G$,
$$
    sa(G) = \left\{
          \begin{array}{ll}
            -\sum_{I\subsetneq V(G)}sa(G\vert_I), & \text{if $G$ has even order;} \\
            0, & \text{otherwise.}
          \end{array}
        \right.
$$
The \emph{$a$-number}~$a(G)$ of~$G$ is defined as~$|sa(G)|$.
In particular, if~$|V(G)|$ is even, then
$$
a(G) = (-1)^{\frac{|V(G)|}{2}}sa(G).
$$
The \emph{$a_i$-number}~$a_i(G)$ of $G$, defined as
$$
    a_i(G):=\sum_{\substack{I\subseteq V(G)\\ |I| =2i}}a(G\vert_I).
$$
The sequence $(a_0(G), a_1(G), \ldots )$ of $G$ is called the \emph{$a$-sequence} of $G$.
The \emph{$b$-number}~$b(G)$ of~$G$ is defined as
$$
    b(G):=\sum_{I\subseteq V(G)}sa(G\vert_I).
$$
Remark that if $G$ consists of $\ell$ connected components $G_1 , \ldots, G_\ell$, then both $a$- and $b$-numbers are multiplicative over components as
$$
    a(G)=a(G_1) \cdots a(G_\ell) \quad \text{ and } \quad     b(G)=b(G_1) \cdots b(G_\ell).
$$

The notions of the $a_i$-number and the $b$-number were introduced in~\cite{Choi-Park2015}, motivated by the canonical real toric variety~$X^\R_G$ associated with~$G$.
Here, the $a_i$-number coincides with the $i$th Betti number of~$X^\R_G$, and the $b$-number with its Euler characteristic.
We now introduce some notation and combinatorial tools to describe key properties of the signed $a$-numbers and the $b$-numbers.

We define the set~$\EC(G)$ as
\begin{equation}\label{cE}
\EC(G) = \{I \subseteq V(G) \colon G\vert_I \text{ has no odd order connected component}\}.
\end{equation}
Assume that $G$ has $2i$ vertices.
We define
$$
\comp(G) = \EC(G) \cup \{V(G)\}.
$$
It is regarded as a graded poset under inclusion.
For~$I \in \comp(G)$ and an integer~$k$ with~$1 \leq k \leq i - \frac{\left\vert I \right\vert}{2}$, let $C_{k}(I;G)$ denote the set of chains
$$
I = I_{0} \subsetneq \cdots \subsetneq I_{k} = V(G),
$$
where~$I_{j} \in \comp(G)$ for all~$1 \leq j \leq k$.
An example of such a poset, arising from a path graph $P_6$ on six vertices, is shown in Figure~\ref{figure:poset}.
\begin{figure}
  \centering
  \begin{tikzpicture}[scale=0.8, every node/.style={
    rectangle, draw, rounded corners=2pt, minimum width=2.2em, minimum height=1.2em, inner sep=2pt, font=\footnotesize}, 
    reverse/.style={yscale=-1}
]
\node (empty) at (9,0.5) {$\emptyset$};

\node (12) at (4,2) {12};
\node (23) at (6.5,2) {23};
\node (34) at (9,2) {34};
\node (45) at (11.5,2) {45};
\node (56) at (14,2) {56};

\node (1234) at (4,4) {1234};
\node (1245) at (6,4) {1245};
\node (2345) at (8,4) {2345};
\node (1256) at (10,4) {1256};
\node (2356) at (12,4) {2356};
\node (3456) at (14,4) {3456};

\node (all) at (9,5.5) {123456};

\draw (all) -- (1234);
\draw (all) -- (1245);
\draw (all) -- (1256);
\draw (all) -- (2345);
\draw (all) -- (2356);
\draw (all) -- (3456);

\draw (1234) -- (12);
\draw (1234) -- (23);
\draw (1234) -- (34);
\draw (1245) -- (12);
\draw (1245) -- (45);
\draw (1256) -- (12);
\draw (1256) -- (56);
\draw (2345) -- (23);
\draw (2345) -- (34);
\draw (2345) -- (45);
\draw (2356) -- (23);
\draw (2356) -- (56);
\draw (3456) -- (34);
\draw (3456) -- (45);
\draw (3456) -- (56);

\foreach \x in {12,23,34,45,56}
    \draw (\x) -- (empty);
\end{tikzpicture}
  \caption{The poset structure of~$\comp(P_6)$}\label{figure:poset}
\end{figure}
For convenience, we set
\begin{equation}\label{eq:zero}
  \left\vert C_k(I;G) \right\vert = 0 \text{ unless } 0 \leq k \leq i - \frac{\left\vert I \right\vert}{2}.
\end{equation}

\begin{proposition}\label{prop:anum}\cite{Choi-Park2015}
    For a simple graph~$G$ on~$2i$ vertices, the signed $a$-number is given by
    $$
    sa(G) = \sum_{k = 0}^{i}(-1)^{k}\left\vert C_{k}(\emptyset;G)\right\vert.
    $$
\end{proposition}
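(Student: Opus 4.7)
My plan is to prove Proposition~\ref{prop:anum} by induction on the number of vertices $n = |V(G)| = 2i$. The base case $i = 0$ is immediate: $\comp(\emptyset) = \{\emptyset\}$ supports only the trivial chain of length zero, so both sides equal $1 = sa(\emptyset)$.

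For the inductive step with $2i \geq 2$, the central ingredient will be a chain-splitting identity. Given any chain $\emptyset = I_0 \subsetneq \cdots \subsetneq I_k = V(G)$ in $\comp(G)$ with $k \geq 1$, the penultimate term $I_{k-1}$ lies strictly below $V(G)$, so $I_{k-1} \in \comp(G) \setminus \{V(G)\} \subseteq \EC(G)$. Since $(G\vert_{I_{k-1}})\vert_J = G\vert_J$ for $J \subseteq I_{k-1}$, I obtain $\EC(G\vert_{I_{k-1}}) = \{J \subseteq I_{k-1} : J \in \EC(G)\}$, and in particular $I_{k-1} \in \EC(G\vert_{I_{k-1}})$, so $I_{k-1}$ is the top of $\comp(G\vert_{I_{k-1}})$. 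Hence the prefix $\emptyset \subsetneq \cdots \subsetneq I_{k-1}$ is precisely an element of $C_{k-1}(\emptyset; G\vert_{I_{k-1}})$. Summing over the penultimate term yields
\[
  |C_k(\emptyset; G)| \;=\; \sum_{I \in \EC(G),\, I \subsetneq V(G)} |C_{k-1}(\emptyset; G\vert_I)| \qquad (k \geq 1).
\]

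Next I would multiply by $(-1)^k$, sum over $k \geq 1$, swap the order of summation, and invoke the inductive hypothesis on each $G\vert_I$. Since $sa(G\vert_I) = 0$ whenever $I \notin \EC(G)$ and $|C_0(\emptyset; G)| = 0$ for $V(G) \neq \emptyset$, extending the outer sum to all proper subsets is harmless and gives
\[
  \sum_{k=0}^{i}(-1)^k |C_k(\emptyset; G)| \;=\; -\sum_{I \subsetneq V(G)} sa(G\vert_I).
\]
When $G$ is connected of even order, the right-hand side equals $sa(G)$ directly by~\eqref{rec}. When $G$ is disconnected but every component has even order, I would derive the same identity from the auxiliary lemma $\sum_{I \subseteq V(G)} sa(G\vert_I) = 0$: writing $G = G_1 \sqcup \cdots \sqcup G_\ell$, multiplicativity factors this sum as $\prod_j \sum_{I_j \subseteq V(G_j)} sa(G_j\vert_{I_j})$, and each factor vanishes by the connected case applied to $G_j$.

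The main obstacle I foresee is handling the remaining case in which some component of $G$ has odd order, where $sa(G) = 0$ on one side and the chain sum must be shown to collapse on the other through the same cancellations in the recursion. The substantive content, however, is the chain-splitting identity above, which can be viewed as a M\"obius-type relation for the poset $\comp(G)$; everything else reduces to careful sign tracking and a routine use of multiplicativity.
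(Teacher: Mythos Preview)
The paper does not supply its own proof of this proposition; the result is quoted from~\cite{Choi-Park2015}, so there is nothing in-paper to compare your argument against.

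Your chain-splitting identity
\[
|C_k(\emptyset;G)|=\sum_{\substack{I\in\EC(G)\\ I\subsetneq V(G)}}|C_{k-1}(\emptyset;G\vert_I)|
\]
is correct, and the inductive reduction to $-\sum_{I\subsetneq V(G)}sa(G\vert_I)$, together with your treatment of the connected case and of the disconnected case with all components even, is a clean and complete proof of the proposition under the hypothesis $V(G)\in\EC(G)$.

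The obstacle you flag in the remaining case, however, is not a gap you can close: as literally stated, the formula is false when $G$ has a component of odd order. Take $G$ to be two isolated vertices. With the paper's primary definition (Section~\ref{sec:intro}, via multiplicativity over components) one has $sa(G)=0\cdot 0=0$, whereas $\EC(G)=\{\emptyset\}$ gives $\comp(G)=\{\emptyset,V(G)\}$, so $|C_0(\emptyset;G)|=0$, $|C_1(\emptyset;G)|=1$, and the right-hand side of the formula equals $-1\neq 0$. (The recursion displayed at the start of Section~\ref{sec:graph}, if read as applying to \emph{all} even-order $G$ rather than just connected ones, would make $sa$ independent of the edge set and cannot be the intended meaning.) In every application within the paper the relevant graph has $V(G)\in\EC(G)$, and under that implicit hypothesis your argument already finishes the proof; the ``obstacle'' is an artifact of the statement rather than a defect in your method.
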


\begin{example}
    Let~$P_6$ be the path graph on~$\{1,\ldots,6\}$ labeled consecutively.
    From the definition of the signed $a$-number in~\eqref{rec}, we compute that~$sa(P_6) = -5$.
    Based on the poset structure shown in Figure~\ref{figure:poset}, we find that
    $$
    |C_0(\emptyset;P_6)| = 0, |C_1(\emptyset;P_6)| = 1, |C_2(\emptyset;P_6)| = 11, \text{ and } |C_3(\emptyset;P_6)| = 15.
    $$
    By Proposition~\ref{prop:anum}, it follows that
    $$
    sa(P_6) = -1+11-15 = -5.
    $$
    Note that this agrees with the value obtained directly from the definition.
\end{example}
Let~$C_6$ be the cycle graph obtained by adding the edge~$\{1,6\}$ to the path graph~$P_6$.
We investigate how this additional edge affects the signed $a$-number.
The set of elements appearing in~$\comp(C_6)$ but not in~$\comp(P_6)$ is
$$
\comp(C_6) \setminus \comp(P_6) = \{\{1,6\},\, \{1,2,3,6\},\, \{1,3,4,6\},\, \{1,4,5,6\}\}.
$$
All chains in~$\comp(C_6)$ can be classified into the following five disjoint types:
\begin{enumerate}
  \item chains that are already contained in~$\comp(P_6)$;
  \item chains that include~$\{1,6\}$;
  \item chains that contain~$\{1,2,3,6\}$ but not~$\{1,6\}$;
  \item chains that contain~$\{1,3,4,6\}$ but not~$\{1,6\}$;
  \item chains that contain~$\{1,4,5,6\}$ but not~$\{1,6\}$.
\end{enumerate}
We now explicitly list all chains of types~(2)–(5) that appear in~$\comp(C_6)$ but not in~$\comp(P_6)$:
\begin{enumerate}
  \item[(2)] $\emptyset \subsetneq \{1,6\} \subsetneq V(C_6)$ in~$C_2(\emptyset;C_6)$,\\[0.4em]
  $\emptyset \subsetneq \{1,6\} \subsetneq I \subsetneq V(C_6)$ in~$C_3(\emptyset;C_6)$ for~$I = \{1,2,5,6\},\{1,2,3,6\},\{1,3,4,6\},\{1,4,5,6\}$.\vspace{.4em}

  \item[(3)] $\emptyset \subsetneq \{1,2,3,6\} \subsetneq V(C_6)$ in~$C_2(\emptyset;C_6)$,\\[0.4em]
  $\emptyset \subsetneq I \subsetneq \{1,2,3,6\} \subsetneq V(C_6)$ in~$C_3(\emptyset;C_6)$ for~$I = \{1,2\},\{2,3\}$.\vspace{.4em}

  \item[(4)] $\emptyset \subsetneq \{1,3,4,6\} \subsetneq V(C_6)$ in~$C_2(\emptyset;C_6)$,\\[0.4em]
  $\emptyset \subsetneq \{3,4\} \subsetneq \{1,3,4,6\} \subsetneq V(C_6)$ in~$C_3(\emptyset;C_6)$.\vspace{.4em}

  \item[(5)] $\emptyset \subsetneq \{1,4,5,6\} \subsetneq V(C_6)$ in~$C_2(\emptyset;C_6)$,\\[0.4em]
 $\emptyset \subsetneq I \subsetneq \{1,4,5,6\} \subsetneq V(C_6)$ in~$C_3(\emptyset;C_6)$ for~$I = \{4,5\},\{5,6\}$.
\end{enumerate}
Refer to Figure~\ref{figure:poset2} for the poset structure of~$\comp(C_6)$, which illustrates the differences from~$\comp(P_6)$.
\begin{figure}
  \centering
  \begin{tikzpicture}[scale=0.9, every node/.style={
    rectangle, draw, rounded corners=2pt, minimum width=2.2em, minimum height=1.2em, inner sep=2pt, font=\footnotesize}, 
    reverse/.style={yscale=-1}
]
\node (empty) at (9,0) {$\emptyset$};

\node (12) at (4,2) {12};
\node (23) at (6.5,2) {23};
\node (34) at (9,2) {34};
\node (45) at (11.5,2) {45};
\node (56) at (14,2) {56};
\node[very thick] (16) at (18,2) {\textbf{16}};

\node (1234) at (4,5) {1234};
\node (1245) at (6,5) {1245};
\node (2345) at (8,5) {2345};
\node (1256) at (10,5) {1256};
\node (2356) at (12,5) {2356};
\node (3456) at (14,5) {3456};
\node[very thick] (1236) at (16,5) {\textbf{1236}};
\node[very thick] (1346) at (18,5) {\textbf{1346}};
\node[very thick] (1456) at (20,5) {\textbf{1456}};

\node (all) at (9,7) {123456};

\draw[draw=gray!50] (all) -- (1234);
\draw[draw=gray!50] (all) -- (1245);
\draw[draw=gray!50] (all) -- (1256);
\draw[draw=gray!50] (all) -- (2345);
\draw[draw=gray!50] (all) -- (2356);
\draw[draw=gray!50] (all) -- (3456);

\draw[draw=gray!50] (1234) -- (12);
\draw[draw=gray!50] (1234) -- (23);
\draw[draw=gray!50] (1234) -- (34);
\draw[draw=gray!50] (1245) -- (12);
\draw[draw=gray!50] (1245) -- (45);
\draw[draw=gray!50] (1256) -- (12);
\draw[draw=gray!50] (1256) -- (56);
\draw[draw=gray!50] (2345) -- (23);
\draw[draw=gray!50] (2345) -- (34);
\draw[draw=gray!50] (2345) -- (45);
\draw[draw=gray!50] (2356) -- (23);
\draw[draw=gray!50] (2356) -- (56);
\draw[draw=gray!50] (3456) -- (34);
\draw[draw=gray!50] (3456) -- (45);
\draw[draw=gray!50] (3456) -- (56);
\draw[very thick] (1256) -- (16);
\draw[very thick] (1236) -- (12);
\draw[very thick] (1236) -- (23);
\draw[very thick] (1346) -- (34);
\draw[very thick] (1456) -- (45);
\draw[very thick] (1456) -- (56);
\draw[very thick] (all) -- (1236) -- (16) -- (empty);
\draw[very thick] (all) -- (1346) -- (16) -- (empty);
\draw[very thick] (all) -- (1456) -- (16) -- (empty);

\foreach \x in {12,23,34,45,56}
    \draw[draw=gray!50] (\x) -- (empty);
\end{tikzpicture}
  \caption{The poset structure of $\comp(C_6)$ relative to $\comp(P_6)$}\label{figure:poset2}
\end{figure}
From the above, we observe that
$$
|C_{2}(\emptyset;C_6) \setminus C_{2}(\emptyset;P_6)| =4 \quad \text{and} \quad |C_{3}(\emptyset;C_6) \setminus C_{3}(\emptyset;P_6)| = 9.
$$
By Proposition~\ref{prop:anum}, the signed $a$-number of~$C_6$ is given by
$$
sa(C_6) = sa(P_6) + \sum_{k = 0}^{i}(-1)^{3}\left\vert C_{k}(\emptyset;C_6) \setminus C_k(\emptyset;P_6)\right\vert = -5 +4 -9 = -10.
$$
The purpose of this paper is to demonstrate that such a decomposition is not a phenomenon restricted to this specific case, but rather a general principle that holds for arbitrary finite simple graphs; see Section~\ref{sec:decom}.

Now, we introduce Proposition~\ref{prop:oddeven}, which determines the sign of the $b$-numbers.
\begin{proposition}\label{prop:oddeven}
Let~$G$ be a finite simple graph with vertex set $V(G)$.
\begin{enumerate}
  \item\label{prop:oddeven1} If~$\left\vert V(G) \right\vert$ is odd, then
  \[
    \left\vert b(G) \right\vert = (-1)^{\frac{\left\vert V(G) \right\vert -1}{2}} b(G).
  \]
  
  \item\label{prop:oddeven2} If~$G$ has a connected component of even order, then
  \[
    b(G) = 0.
  \]
  
  \item\label{prop:oddeven3} Assume that~$\left\vert V(G) \right\vert = 2i$ is even, and~$G$ has no connected component of even order.
      If the number of connected components of~$G$ is $2j$, then
  \[
    \left\vert b(G) \right\vert = (-1)^{i-j} \cdot b(G).
  \]
\end{enumerate}
\end{proposition}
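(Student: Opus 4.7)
The plan is to prove the three parts together, driven by the multiplicativity of $b$ over disjoint unions (already noted in this section) and the recursive definition of $sa$. I would first establish Part~(\ref{prop:oddeven2}) directly: for a connected graph $G$ with $|V(G)| = 2i$ even, the defining relation $sa(G) = -\sum_{I \subsetneq V(G)} sa(G|_I)$ rearranges immediately into $b(G) = \sum_{I \subseteq V(G)} sa(G|_I) = 0$. Applying multiplicativity then extends $b(G) = 0$ to every graph containing an even-order connected component.

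With Part~(\ref{prop:oddeven2}) in hand, Parts~(\ref{prop:oddeven1}) and~(\ref{prop:oddeven3}) reduce to the case where every connected component of $G$ has odd order. Writing $G = G_1 \sqcup \cdots \sqcup G_\ell$ with $|V(G_k)| = 2m_k+1$, multiplicativity gives $b(G) = \prod_k b(G_k)$ together with the parity identity $\sum_k m_k = (|V(G)|-\ell)/2$. Once I prove the key sub-claim that $\mathrm{sign}\,b(H) = (-1)^m$ for every connected $H$ with $|V(H)|=2m+1$, the component signs multiply to $(-1)^{(|V(G)|-\ell)/2}$, specializing to $(-1)^{i-j}$ in Part~(\ref{prop:oddeven3}) (where $\ell = 2j$) and to $(-1)^{(|V(G)|-1)/2}$ in the connected case of Part~(\ref{prop:oddeven1}) (where $\ell = 1$).

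For the sub-claim, I would reorganize $b(G)$ into an alternating chain count inside $\EC(G)$. Since $sa(G|_I) = 0$ unless $I \in \EC(G)$, and Proposition~\ref{prop:anum} applied to each such $G|_I$ expresses $sa(G|_I)$ as an alternating count of chains in $\comp(G|_I)$ (which coincide with chains in $\EC(G)$ from $\emptyset$ up to $I$), swapping the order of summation yields
\[
b(G) \;=\; \sum_{k \geq 0} (-1)^k D_k(G),
\]
where $D_k(G)$ is the number of chains $\emptyset = I_0 \subsetneq \cdots \subsetneq I_k$ in $\EC(G)$. By the standard identity relating alternating chain counts to M\"obius functions, this equals $\sum_{I \in \EC(G)} \mu_{\comp(G)}(\emptyset, I)$. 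When $G$ is connected with $|V(G)| = 2m+1$ odd we have $V(G) \notin \EC(G)$, and combining with the defining identity $\sum_{I \in \comp(G)} \mu_{\comp(G)}(\emptyset, I) = 0$ collapses the formula to $b(G) = -\mu_{\comp(G)}(\emptyset, V(G))$.

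The main obstacle will be to pin down the sign of $\mu_{\comp(G)}(\emptyset, V(G))$. My plan is to show that $\comp(G)$ is a graded poset of rank $m+1$ whose proper part is Cohen--Macaulay, which yields $(-1)^{m+1}\mu_{\comp(G)}(\emptyset, V(G)) \geq 0$ and hence $\mathrm{sign}\,b(G) = (-1)^m$, as required. A natural source for the requisite shelling is the recursive facet structure of the graph associahedron $P_G$ from Proposition~\ref{prop:carr}: adapting the decomposition $P_{G|_I} \times P_{G^\ast_I}$ of facets inductively should furnish a shelling of the order complex on $\comp(G) \setminus \{\emptyset, V(G)\}$. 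Producing and verifying this sign-controlling structure is the step where I expect the bulk of the technical work to lie.
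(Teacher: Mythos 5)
Your reduction is clean up to its final step, but that final step is precisely where the content of the proposition lives, and you have not supplied it. The paper disposes of parts~\eqref{prop:oddeven1} and~\eqref{prop:oddeven2} by citing \cite{Choi-Park2015} and then derives part~\eqref{prop:oddeven3} from part~\eqref{prop:oddeven1} via multiplicativity exactly as you do; your argument for part~\eqref{prop:oddeven2} (telescoping the defining recursion for connected even $G$, then multiplying over components) is complete and correct, and your identity $b(G)=\sum_{k\ge 0}(-1)^k D_k(G)=-\mu_{\comp(G)}(\emptyset,V(G))$ for connected odd $G$ is a valid combination of Proposition~\ref{prop:anum}, Proposition~\ref{prop:conn}~\eqref{prop:conn1}, and Philip Hall's theorem. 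But the sign of $\mu_{\comp(G)}(\emptyset,V(G))$ is exactly part~\eqref{prop:oddeven1} in disguise, and you establish it only modulo the claim that the proper part of $\comp(G)$ is Cohen--Macaulay. That claim is not proved, is not obviously true, and is not available anywhere in this paper: $\comp(G)$ is the poset of vertex subsets inducing only even components, not the face poset of $P_G$, so Proposition~\ref{prop:carr} does not hand you a shelling of its order complex, and you would have to build one from scratch (even the gradedness of $\comp(G)$, which you assert, requires a short argument via Proposition~\ref{prop:conn}~\eqref{prop:conn2}). Since you yourself describe this as ``the bulk of the technical work,'' the proof is incomplete at its crux, which is the one point where the paper's citation cannot be replaced by anything already in the text.

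A secondary issue: your own sign bookkeeping shows that for odd $|V(G)|$ with all components of odd order, the sign of $b(G)$ is $(-1)^{(|V(G)|-\ell)/2}$ where $\ell$ is the number of components, and this agrees with the stated $(-1)^{(|V(G)|-1)/2}$ only when $\ell\equiv 1\pmod 4$. For instance, three isolated vertices give $b(G)=1$, whereas part~\eqref{prop:oddeven1} would force $b(G)\le 0$. So part~\eqref{prop:oddeven1} as literally stated holds only for connected $G$ (or trivially when $b(G)=0$); the paper, like you, only ever applies it to connected components, but your write-up silently restricts to ``the connected case of Part~\eqref{prop:oddeven1}'' without acknowledging that the disconnected case of the statement you were asked to prove is actually false.
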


\begin{proof}
    The first two cases were proved in~\cite{Choi-Park2015}.
    We prove the third case.
    Let~$G_1, \ldots, G_{2j}$ be the connected components of~$G$, where~$\left\vert V(G_i) \right\vert = 2k_i + 1$ for some non-negative integers~$k_i$.
    Since~$G$ has~$2i$ vertices, we have
    $$
    k_1+ \cdots + k_{2j} + j= i.
    $$
    By \eqref{prop:oddeven1}, for each~$1 \leq i \leq 2j$, we know that
    $$
    \left\vert b(G_i) \right\vert = (-1)^{k_i} \cdot b(G_i).
    $$
    Therefore, we obtain
    \begin{align*}
      \left\vert b(G) \right\vert & =\left\vert b(G_1) \cdot b(G_2) \cdots b(G_{2j}) \right\vert \\
       & =\left\vert b(G_1) \right\vert \cdot \left\vert b(G_2) \right\vert  \cdots \left\vert b(G_{2j}) \right\vert \\
       & = (-1)^{k_1+\cdots+k_{2j}} \cdot b(G) \\
       & = (-1)^{i-j} \cdot b(G),
    \end{align*}
    as desired.
    \end{proof}

\section{A decomposition formula}\label{sec:decom}

Let $G$ be a finite simple graph.
For each pair~$\edge$ of vertices of~$G$, let~$G + \edge$ denote the graph obtained from~$G$ by adding~$\edge$ as an edge.
Note that~$G = G +\edge$ if~$\edge$ is already an edge of~$G$.

Recall the definition of~$\EC(G)$ from~\eqref{cE}, and we now describe some of its properties.

\begin{proposition}\label{prop:conn}
    For a finite simple graph~$G$, let~$J \in \EC(G)$, and~$\edge$ a pair of vertices of~$G$.
    The following statements hold.
    \begin{enumerate}
      \item\label{prop:conn1} For~$I \subseteq J$, $I \in \EC(G)$ if and only if~$I \in \EC(G\vert_J)$. 
      \item\label{prop:conn2} For~$J \subseteq I$, $I \in \EC(G)$ if and only if~$I \setminus J \in \EC(G^\ast_J)$.
      \item\label{prop:conn3} $\EC(G) \subseteq \EC(G+\edge)$.
    Moreover, $I \in \EC(G+\edge) \setminus \EC(G)$ implies that $\edge \subseteq I$.
    \end{enumerate}
\end{proposition}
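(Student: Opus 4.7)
The plan is to handle the three items in order, with the real content concentrated in~\eqref{prop:conn2}.

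For~\eqref{prop:conn1}, the identity $(G\vert_J)\vert_I = G\vert_I$ for $I \subseteq J$ shows the two graphs literally coincide, so the equivalence is immediate and does not use $J \in \EC(G)$.

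For~\eqref{prop:conn3}, I split on whether $\edge \subseteq I$. If $\edge \not\subseteq I$, then $(G+\edge)\vert_I = G\vert_I$ as graphs, so $I$ lies in $\EC(G)$ exactly when it lies in $\EC(G+\edge)$; this both settles the inclusion on this branch and proves the contrapositive of the ``moreover'' clause. If $\edge \subseteq I$ and $I \in \EC(G)$, adding $\edge$ to $G\vert_I$ either preserves every component or merges two components (each of even order by hypothesis) into a single even-order component, so $I \in \EC(G+\edge)$ in this case as well.

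The substantive work lies in~\eqref{prop:conn2}. By~\eqref{eq:inter}, $G^\ast_J\vert_{I \setminus J} = (G\vert_I)^\ast_J$, so the statement reduces to: for $H := G\vert_I$ with $J \subseteq V(H)$, the graph $H$ has only even-order components iff $H^\ast_J$ does. The main lemma I will establish is a component-level dictionary: the assignment $D \mapsto D \setminus J$ is a bijection between the connected components of $H$ not contained in $J$ and the connected components of $H^\ast_J$. For the forward direction, any path $a = v_0, v_1, \dots, v_k = b$ in $D$ joining two vertices $a, b \in D \setminus J$ is cut at its vertices outside $J$; each resulting subpath has all internal vertices in $J$ and therefore witnesses an edge of $H^\ast_J$ between its endpoints, so $a$ and $b$ are $H^\ast_J$-connected. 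For the reverse direction, each edge of $H^\ast_J$ lifts by definition to an $H$-path through $J$, so endpoints of such an edge lie in a common $H$-component, and hence each $H^\ast_J$-component sits inside a unique $D \setminus J$.

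The parity translation then uses the hypothesis $J \in \EC(G)$ as follows. The component of $H^\ast_J$ corresponding to $D$ has order $|D| - |D \cap J|$, while the components of $H$ contained in $J$ are precisely certain components of $G\vert_J$ (the induced edge sets agree on subsets of $J$) and hence already have even order. For a component $D$ of $H$ with $D \not\subseteq J$, the intersection $D \cap J$ is a disjoint union of whole components of $G\vert_J$, since any component of $G\vert_J$ meeting $D$ must lie inside $D$ by connectedness; each such piece is even-order by $J \in \EC(G)$, so $|D \cap J|$ is even and the parities of $|D|$ and $|D \setminus J|$ coincide. Summing over components yields the required equivalence. The main obstacle is precisely this last matching step: without the hypothesis $J \in \EC(G)$ the parities of $|D|$ and $|D \setminus J|$ would generally differ, and the equivalence in~\eqref{prop:conn2} would fail.
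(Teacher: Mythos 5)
Your proof is correct, and for part~\eqref{prop:conn2} it takes a genuinely different route from the paper. The paper first invokes Lemma~\ref{lemma:rec} to reduce to the case $|J|=2$ (so that $J$ is a single edge) and then argues by cases on whether the component of $G\vert_I$ containing $J$ is an isolated edge, leaning on Theorem~\ref{thm:col}\eqref{thm:col,conn} to see that the reconnected complement of a connected even component stays connected; this reduction implicitly requires decomposing a general $J \in \EC(G)$ into a sequence of edges, each an edge of the successive reconnected complements, a step the paper leaves to the reader. You instead prove the statement for arbitrary $J$ in one pass via the component dictionary $D \mapsto D \setminus J$ between components of $H = G\vert_I$ not contained in $J$ and components of $H^\ast_J$ (the path-cutting argument for surjectivity of adjacency and the path-lifting argument for the reverse containment are both sound), and then handle parity by observing that $D \cap J$ is a disjoint union of whole components of $G\vert_J$, hence of even cardinality by the hypothesis $J \in \EC(G)$. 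Your approach is more self-contained and makes explicit exactly where the hypothesis $J \in \EC(G)$ enters, at the cost of not reusing the machinery of Lemma~\ref{lemma:rec} and Theorem~\ref{thm:col} that the paper has already built; either argument is acceptable, and yours arguably fills in details the paper elides. Parts~\eqref{prop:conn1} and~\eqref{prop:conn3}, which the paper dismisses as straightforward, are handled correctly and completely in your write-up.
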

\begin{proof}
The statements~\eqref{prop:conn1} and~\eqref{prop:conn3} are straightforward to verify.
We now consider case~\eqref{prop:conn2}.
By Lemma~\ref{lemma:rec}, we may assume without loss of generality that~$|J|=2$, in which case~$J$ is regarded as an edge of~$G$.

Assume~$I \in \EC(G)$ and~$J \subseteq I$.
Then there exists a subset~$I_0 \subseteq I$ such that~$G\vert_{I_0}$ is a connected component of~$G\vert_{I}$ and contains~$J$.
By~\eqref{thm:col,conn} in Theorem~\ref{thm:col}, the reconnected complement~$(G\vert_{I_0})^\ast_J$ is connected.
Moreover, since~$(G\vert_{I_0})^\ast_J$ is also a connected component of~$G^\ast_J$ with even order, it follows that~$I \setminus J \in \EC(G^\ast_J)$.

Conversely, assume that~$I \setminus J \in \EC(G^\ast_J)$.
If~$G\vert_{J}$ is an isolated edge of~$G$, then we observe that~$G^\ast_J = G\vert_{V(G) \setminus J}$, where~$V(G)$ is the set of vertices of~$G$.
Then it is immediate that~$I \in \EC(G)$.
If~$G\vert_J$ is not isolated in~$G$, then there exists a connected component~$G^\ast_J\vert_{I_0}$ of~$G^\ast_J$ such that~$G\vert_{I_0 \cup J}$ forms connected component of~$G$.
Since~$|I_0 \cup J|$ is even, $I \in \EC(G)$.
This completes the proof.
\end{proof}

Assume that~$G$ is a simple graph with~$2i$ vertices.
\begin{lemma}\label{lemma:collap}
    Given~$J \in \comp(G)$,
    $$
    \left\vert C_k(J;G) \right\vert = \left\vert C_k(\emptyset;G^\ast_J) \right\vert
    $$
    for each~$0 \leq k \leq i - \frac{\left\vert J \right\vert}{2}$.
\end{lemma}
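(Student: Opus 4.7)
The plan is to construct an explicit bijection
$$
\Phi \colon C_k(J;G) \longrightarrow C_k(\emptyset;G^\ast_J),
\qquad (J = I_0 \subsetneq I_1 \subsetneq \cdots \subsetneq I_k = V(G)) \longmapsto (\emptyset \subsetneq I_1 \setminus J \subsetneq \cdots \subsetneq I_k \setminus J),
$$
with inverse $\Psi \colon (I'_0 \subsetneq \cdots \subsetneq I'_k) \mapsto (I'_0 \cup J \subsetneq \cdots \subsetneq I'_k \cup J)$. Since $J \subseteq I_j$ for every $j$, the strict inclusions are preserved in both directions, so once I check that $\Phi$ and $\Psi$ actually land in the target chain posets, the two maps are manifestly mutually inverse and the equality of cardinalities follows.

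The main verification is well-definedness of $\Phi$ (and symmetrically of $\Psi$). Since $J \in \comp(G)$, either $J = V(G)$ (in which case only the trivial chain $k=0$ exists and both sides equal $1$, handled separately) or $J \in \EC(G)$. In the latter case, for each $j \geq 1$ the element $I_j$ is either in $\EC(G)$ or equals $V(G)$. I would then apply Proposition~\ref{prop:conn}\eqref{prop:conn2} directly: since $J \subseteq I_j$, we have $I_j \in \EC(G)$ if and only if $I_j \setminus J \in \EC(G^\ast_J)$. For the top element, $V(G) \setminus J = V(G^\ast_J)$ by definition of the reconnected complement, so this maps to the correct top of $\comp(G^\ast_J)$. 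The same reasoning, read in reverse, shows $\Psi$ is well-defined.

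Finally, I would verify the degree constraint: a chain in $C_k(J;G)$ exists only when $k \leq i - |J|/2$, and a chain in $C_k(\emptyset;G^\ast_J)$ exists only when $k \leq i' - 0 = i - |J|/2$, where $i' = |V(G^\ast_J)|/2 = i - |J|/2$. Thus the ranges agree, and the convention~\eqref{eq:zero} makes both sides vanish outside this range. Wrapping these observations together gives the claim, so the bulk of the argument is essentially bookkeeping once Proposition~\ref{prop:conn}\eqref{prop:conn2} is invoked; no genuine obstacle is expected beyond carefully isolating the degenerate case $J = V(G)$.
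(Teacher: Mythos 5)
Your proof is correct and matches the paper's argument essentially verbatim: the paper defines the same pair of mutually inverse maps (subtracting and adjoining $J$ along the chain) and likewise justifies well-definedness via Proposition~\ref{prop:conn}\eqref{prop:conn2}. Your extra bookkeeping on the degenerate case $J = V(G)$ and the degree range is harmless and consistent with the convention~\eqref{eq:zero}.
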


\begin{proof}
    Since the statement is immediate when~$k = 0$, we assume~$k \geq 1$.
    Let $\cC \in C_k(J;G)$ be a chain of the form
    $$
    J= I_{0} \subsetneq \cdots \subsetneq I_{k} = V(G).
    $$
    Define $\varphi_J(\cC)$ to be a chain
    $$
    \emptyset = (I_0 \setminus J) \subsetneq \cdots \subsetneq (I_k \setminus J) = V(G^\ast_J).
    $$
    For each~$1 \leq \ell \leq k$, we have $I_\ell \in \comp(G)$, and by~\eqref{prop:conn2}~in~Proposition~\ref{prop:conn}, it follows that~$I_\ell \setminus J \in \comp(G^\ast_J)$.
    We obtain that~$\varphi_J(\cC) \in C_k(\emptyset; G^\ast_J)$, and, hence, $\varphi_J$ is regarded as a map from~$C_k(J;G)$ to~$C_k(\emptyset;G^\ast_J)$.
    
    Conversely, let~$\cC \in C_k(\emptyset;G^\ast_J)$ be a chain of the form
    $$
    \emptyset= I^\ast_{0} \subsetneq \cdots \subsetneq I^\ast_{k} = V(G^\ast_J).
    $$
    Define~$\psi_J(\cC)$ to be
    $$
    J = J \cup I^\ast_{0} \subsetneq \cdots \subsetneq J \cup I^\ast_{k} = V(G).
    $$
    Since each~$J \cup I^\ast_\ell$ belongs to~$\comp(G)$ by Proposition~\ref{prop:conn}\eqref{prop:conn2}, it follows that~$\psi_J(\cC) \in C_k(J;G)$, and thus~$\psi_J$ defines a map from~$C_k(\emptyset; G^\ast_J)$ to~$C_k(J;G)$.
    
    It is straightforward to check that $\varphi_J$ and $\psi_J$ are inverses of each other, establishing a one-to-one correspondence between $C_k(J;G)$ and $C_k(\emptyset; G^\ast_J)$.
\end{proof}

Let a pair~$\edge$ of vertices of~$G$ be given.
Consider a chain~$\cC \in C_k(\emptyset;G+\edge)$ given by
$$
\emptyset = I_0 \subsetneq \cdots \subsetneq I_k = V(G+\edge).
$$
Assume that there is an index~$0 \leq \ell \leq k$ such that~$I_\ell \notin \EC(G)$.
Since~$\emptyset \in \EC(G)$, any such index~$\ell$ should be at least~$1$.
We choose the smallest such index~$\ell$, and we call~$\cC$ \emph{$\edge$-singular} at~$I_\ell$.
If no such index exists, then~$\cC$ is said to be \emph{$\edge$-regular}.

\begin{lemma}\label{lemma:split}
    Let~$G$ be a simple graph with~$2i$ vertices, where~$ i \geq 1$, and $\edge$ a pair of vertices of~$G$.
    If~$J \in \EC(G+\edge) \setminus \EC(G)$, then the number of $\edge$-singular chains~$\cC \in C_k(\emptyset;G+\edge)$ at~$J$ is given by
    $$
    \sum_{p+q = k-1}\Bigl(\bigl|  C_p(J; G + \edge) \bigl| \cdot \sum_{J' \in \EC(G\vert_J)}\left\vert C_q(\emptyset; G\vert_{J'})\right\vert\Bigl),
    $$
    where~$1 \leq k \leq i$.
\end{lemma}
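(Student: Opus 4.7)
The plan is to set up an explicit bijection between the $\edge$-singular chains at $J$ in $C_k(\emptyset; G+\edge)$ and triples $(p, J', (\cC_1, \cC_2))$ with $p, q \geq 0$ satisfying $p + q = k - 1$, $J' \in \EC(G\vert_J)$, and $(\cC_1, \cC_2) \in C_q(\emptyset; G\vert_{J'}) \times C_p(J; G+\edge)$; summing over admissible triples will then produce the claimed formula.

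\emph{Forward map.} I would start with $\cC = (\emptyset = I_0 \subsetneq \cdots \subsetneq I_k = V(G+\edge))$ that is $\edge$-singular at $J$. By definition, there is a unique index $\ell \geq 1$ with $I_\ell = J$, with $I_j \in \EC(G)$ for $0 \leq j \leq \ell-1$, and with $J \notin \EC(G)$. I would split $\cC$ at position $\ell$ into the head $\cC_1 := (\emptyset = I_0 \subsetneq \cdots \subsetneq I_{\ell-1})$ and the tail $\cC_2 := (J = I_\ell \subsetneq \cdots \subsetneq I_k)$, and set $J' := I_{\ell-1}$, $p := k-\ell$, and $q := \ell-1$. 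The tail lies in $C_p(J; G+\edge)$ directly from the definitions. For the head, since $I_j \subseteq J' \subseteq J$ and $I_j \in \EC(G)$ for every $0 \leq j \leq \ell-1$, Proposition~\ref{prop:conn}\eqref{prop:conn1} gives $I_j \in \EC(G\vert_{J'})$, so $\cC_1 \in C_q(\emptyset; G\vert_{J'})$, and in particular $J' \in \EC(G\vert_J)$.

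\emph{Inverse map.} Given an admissible triple, I would concatenate to form $\cC := (\emptyset \subsetneq \cdots \subsetneq J' \subsetneq J \subsetneq \cdots \subsetneq V(G+\edge))$. Three verifications are needed: the strict inclusion $J' \subsetneq J$ holds because $J' \in \EC(G)$ by Proposition~\ref{prop:conn}\eqref{prop:conn1} while $J \notin \EC(G)$; every head element lies in $\EC(G\vert_{J'}) \subseteq \EC(G) \subseteq \EC(G+\edge) \subseteq \comp(G+\edge)$ by Proposition~\ref{prop:conn}\eqref{prop:conn1} and~\eqref{prop:conn3}, while tail elements lie in $\comp(G+\edge)$ by construction, so $\cC \in C_k(\emptyset; G+\edge)$; and $\cC$ is $\edge$-singular at $J$ because all head elements are in $\EC(G)$ while $J$ is not.

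The main technical point is correctly tracking the three membership conditions $\EC(G)$, $\EC(G\vert_{J'})$, and $\EC(G+\edge)$ across the ``singular transition'' from $J'$ to $J$; fortunately Proposition~\ref{prop:conn} provides precisely the translations needed. The boundary cases $\ell = 1$ (forcing $J' = \emptyset$ and $\cC_1$ trivial) and $\ell = k$ (forcing $J = V(G+\edge)$ and $\cC_2$ trivial) require no separate treatment under the conventions in~\eqref{eq:zero}.
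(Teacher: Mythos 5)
Your proposal is correct and follows essentially the same route as the paper: split an $\edge$-singular chain at the unique occurrence of $J$ into a head in $C_q(\emptyset; G\vert_{J'})$ and a tail in $C_p(J; G+\edge)$, and invert by concatenation, with Proposition~\ref{prop:conn}\eqref{prop:conn1} and~\eqref{prop:conn3} handling the membership checks exactly as in the paper. No gaps.
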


\begin{proof}
    Let~$\cC \in C_k(\emptyset;G+\edge)$ be an $\edge$-singular chain at~$J$, where~$k \geq 1$, given by:
    $$
    \emptyset = I_0 \subsetneq \cdots \subsetneq I_k = V(G+\edge).
    $$
    Then there exists an index~$1 \leq \ell \leq k$ such that~$I_\ell = J$.
    We denote~$I_{\ell-1}$ by~$J'$.
    By Proposition~\ref{prop:conn}~\eqref{prop:conn1}, $J' \in \EC(G)$ and~$J' \subseteq J$, and hence~$J' \in \EC(G\vert_J)$ as well.
    Thus, $\cC$ splits into two chains:
    $$
    \emptyset = I_0 \subsetneq \cdots \subsetneq I_{\ell-1} = J' \quad \text{and} \quad J = I_{\ell} \subsetneq \cdots \subsetneq I_k = V(G+\edge).
    $$
    The former is an element of~$C_{\ell-1}(\emptyset;G\vert_{J'})$ and the latter is an element of~$C_{k-\ell}(J;G+\edge)$.
    
    Now, choose $J' \subsetneq J$ with~$J' \in \EC(G\vert_J)$.
    Let chains in~$C_p(J;G+e)$ and in~$C_q(\emptyset;G\vert_{J'})$ be defined as
    $$
    J = J_0 \subsetneq \cdots \subsetneq J_p = V(G+\edge) \quad \text{and} \quad \emptyset = J'_0 \subsetneq \cdots \subsetneq J'_q = V(G\vert_{J'}),
    $$
    respectively, where~$p, q \geq 0$ and $p+q = k-1$.
    We define $\cC$ as the chain obtained by combining the two chains above as
    $$
    \emptyset = J'_0 \subsetneq \cdots \subsetneq J'_q = J' \subsetneq J = J_0 \subsetneq \cdots \subsetneq J_p =V(G+\edge).
    $$
    By~\eqref{prop:conn1} and~\eqref{prop:conn3}~of~Proposition~\ref{prop:conn}, it follows that, for each~$0 \leq \ell \leq q$,
    $$
    J'_{\ell} \in \EC(G\vert_{J'}) \subseteq \EC(G) \subseteq \EC(G+\edge).
    $$ 
    Moreover, since~$J \in \EC(G+\edge)\setminus \EC(G)$, we conclude that $\cC$ is $\edge$-singular at~$J$.
    
    Finally, the splitting and combining described above yield a one-to-one correspondence between the set of~$\edge$-singular chains in~$C_k(\emptyset;G+\edge)$ at~$J$ and the set
    $$
    \bigsqcup_{p+q = k-1}\Bigl( C_p(J; (G + \edge))  \times \bigsqcup_{J' \in \comp(G\vert_J)} C_q(\emptyset; G\vert_{J'})\Bigl).
    $$
    This completes the proof.
    \end{proof}

\begin{lemma}\label{lemma:decom}
    Let $G$ be a simple graph on~$2i$ vertices.
    For a pair~$\edge$ of vertices of~$G$,
    $$
    a(G+\edge) = a(G) + \sum_{J \in \EC(G+\edge) \setminus \EC(G)} \left\vert b(G\vert_J) \right\vert \cdot a((G+\edge)^\ast_J).
    $$
\end{lemma}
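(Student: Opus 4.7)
The plan is to combine Proposition~\ref{prop:anum} with Lemmas~\ref{lemma:collap} and~\ref{lemma:split} to produce a signed version of the identity, and then convert from signed to unsigned $a$-numbers via Proposition~\ref{prop:oddeven}. First, I write
\[
sa(G+\edge) \;=\; \sum_{k \geq 0} (-1)^k\, |C_k(\emptyset;\, G+\edge)|
\]
and partition each $C_k(\emptyset;\, G+\edge)$ into $\edge$-regular chains and chains $\edge$-singular at some smallest $I_\ell$. By Proposition~\ref{prop:conn}\eqref{prop:conn3}, every intermediate singular element lies in $\EC(G+\edge) \setminus \EC(G)$. The $\edge$-regular chains are in natural bijection with $C_k(\emptyset;\, G)$ whenever $V(G) \in \EC(G)$, and are otherwise empty while $sa(G) = 0$ by definition; either way, Proposition~\ref{prop:anum} shows that the regular contribution aggregates to $sa(G)$.

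Next, for each $J \in \EC(G+\edge) \setminus \EC(G)$, Lemma~\ref{lemma:split} expresses the number of chains $\edge$-singular at $J$ as a convolution indexed by $p+q = k-1$. Substituting $k = p+q+1$ and exchanging the order of summation, Lemma~\ref{lemma:collap} converts $|C_p(J;\, G+\edge)|$ into $|C_p(\emptyset;\, (G+\edge)^\ast_J)|$, and Proposition~\ref{prop:anum} applied to $(G+\edge)^\ast_J$ (which has the even number $2i-|J|$ of vertices since $J \in \EC(G+\edge)$) and to each $G|_{J'}$ collapses the two inner sums into signed $a$-numbers. Because $sa(G|_{J'}) = 0$ whenever $G|_{J'}$ has an odd-order component, restricting $J'$ to $\EC(G|_J)$ is harmless and $\sum_{J' \in \EC(G|_J)} sa(G|_{J'}) = \sum_{J' \subseteq J} sa(G|_{J'}) = b(G|_J)$. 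Assembling these pieces yields
\[
sa(G+\edge) \;=\; sa(G) \;-\; \sum_{J \in \EC(G+\edge) \setminus \EC(G)} sa\!\left((G+\edge)^\ast_J\right) \cdot b(G|_J).
\]

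Finally, I convert from $sa$ to $a$ via the identity $sa(H) = (-1)^{|V(H)|/2} a(H)$. Dividing through by $(-1)^i$, each summand on the right picks up a factor $(-1)^{|J|/2 + 1} b(G|_J)$, and the main obstacle is to verify that this factor coincides with $|b(G|_J)|$. The saving observation is that $b(G|_J) \neq 0$ forces $G|_J$ to consist solely of odd-order components (Proposition~\ref{prop:oddeven}\eqref{prop:oddeven2}), while $J \in \EC(G+\edge) \setminus \EC(G)$ together with $\edge \subseteq J$ (Proposition~\ref{prop:conn}\eqref{prop:conn3}) constrains $G|_J$ to have \emph{exactly two} such components, namely the two that $\edge$ merges into a single even component of $(G+\edge)|_J$. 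With this $j = 1$ in the notation of Proposition~\ref{prop:oddeven}\eqref{prop:oddeven3}, one has $b(G|_J) = (-1)^{|J|/2 - 1} |b(G|_J)|$, so the total sign $(-1)^{|J|/2 + 1}\cdot (-1)^{|J|/2 - 1} = 1$ delivers $|b(G|_J)|$ exactly. In the residual case $V(G) \notin \EC(G+\edge)$, both $a(G+\edge)$ and $a(G)$ vanish, and a direct argument shows each $(G+\edge)^\ast_J$ inherits an odd-order component from $G+\edge$, so every term on the right-hand side also vanishes and the identity trivially reads $0 = 0$.
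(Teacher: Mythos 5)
Your proof is correct and follows essentially the same route as the paper's: both partition the chains of $C_k(\emptyset;G+\edge)$ into $\edge$-regular and $\edge$-singular ones, apply Lemmas~\ref{lemma:split} and~\ref{lemma:collap} together with Proposition~\ref{prop:anum} to turn the singular contribution into $-b(G\vert_J)\cdot sa((G+\edge)^\ast_J)$, and then fix the signs via Proposition~\ref{prop:oddeven} using the observation that $G\vert_J$ has exactly two odd components. The only differences are organizational (you carry $sa$ throughout and convert to $a$ at the end) plus your explicit treatment of the degenerate case $V(G)\notin\EC(G+\edge)$, which the paper's partition claim silently glosses over but which, as you note, reduces to $0=0$.
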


\begin{proof}
    For each~$0 \leq k \leq i$, let~$RC_k$ denote the set of~$\edge$-regular chains in~$C_k(\emptyset;G+\edge)$, and~$SC^J_k$ the set of $\edge$-singular chains at $J \in \EC(G+\edge) \setminus \EC(G)$ in~$C_k(\emptyset;G+\edge)$.
    It is immediate that
    $$
    C_k(\emptyset;G+\edge) = RC_k \sqcup \Bigl(\bigsqcup_{J \in \EC(G+\edge) \setminus \EC(G)} SC_k^J \Bigl).
    $$
    
    Let~$1 \leq k \leq i$ be given. 
    It can be readily verified that~$RC_k= C_k(\emptyset;G)$.
    Now let us consider~$J \in \EC(G+\edge)\setminus \EC(G)$ with~$k \leq j := \frac{\left\vert J \right\vert}{2} \leq i$.
    Since it is clear that~$G\vert_J$ has exactly two components of odd order, by Proposition~\ref{prop:oddeven}, we have
    \begin{equation}\label{eq:bGJ}
      \left\vert b(G\vert_J) \right\vert = (-1)^{j-1} \cdot b(G\vert_J).
    \end{equation}
    Furthermore, it follows that
    \begin{align*}
      \left\vert SC^J_k \right\vert & =  \sum_{p+q = k-1}\Bigl(\bigl| C_p(J; G + \edge) \bigl| \cdot \sum_{J' \in \EC(G\vert_J)}\left\vert C_q(\emptyset; G\vert_{J'})\right\vert\Bigl) \text{ (by Lemma~\ref{lemma:split})} \\
       & = \sum_{p+q = k-1}\Bigl(\bigl| C_p(\emptyset; (G + \edge)^\ast_J) \bigl| \cdot \sum_{J' \in \EC(G\vert_J)}\left\vert C_q(\emptyset; G\vert_{J'})\right\vert\Bigl) \text{ (by Lemma~\ref{lemma:collap})}.
    \end{align*}
    Using the convention stated in~\eqref{eq:zero}, we derive the following equality:
    \begin{align*}
     \sum_{k=0}^{i}(-1)^{k}\left\vert SC^J_k\right\vert & = -\sum_{p=0}^{i}\sum_{q=0}^{i}\Bigl((-1)^{p+q}\bigl| C_p(\emptyset; (G + \edge)^\ast_J) \bigl| \cdot \sum_{J' \in \EC(G\vert_J)} \left\vert C_q(\emptyset; G\vert_{J'})\right\vert\Bigl) \\ 
       & = -\sum_{p=0}^{i}\Bigl((-1)^p\bigl| C_p(\emptyset; (G + \edge)^\ast_J) \bigl| \cdot \sum_{J' \in \EC(G\vert_J)} \sum_{q=0}^{i}(-1)^q\left\vert C_q(\emptyset; G\vert_{J'})\right\vert\Bigl) \\
       & = -\sum_{p=0}^{i}\Bigl((-1)^p\bigl| C_p(\emptyset; (G + \edge)^\ast_J) \bigl| \cdot \sum_{J' \in \EC(G\vert_J)} sa(G\vert_{J'})\Bigl) \quad \text{(by Proposition~\ref{prop:anum})}\\
       & = -b(G\vert_J)\sum_{p=0}^{i}(-1)^p\bigl| C_p(\emptyset; (G + \edge)^\ast_J) \bigl| \ \quad \bigl(\because b(G\vert_J) = \sum_{J' \in \EC(G\vert_J)} sa(G\vert_{J'})\bigl)\\
       &= -b(G\vert_J) \cdot sa((G + \edge)^\ast_J) \quad \text{(by Proposition~\ref{prop:anum})}\\
       &= (-1)^{j-1} \cdot b(G\vert_J) \cdot (-1)^{j} \cdot sa((G + \edge)^\ast_J) \\
       &= \left\vert b(G\vert_J) \right\vert \cdot a((G + \edge)^\ast_J) \quad \text{(by \eqref{eq:bGJ})}.
    \end{align*}
    We conclude that
    \begin{align*}
      a(G+\edge) & = \sum_{k = 0}^{i}(-1)^{i+k}\left\vert C_{k}(\emptyset;G+\edge)\right\vert
     \\
     & = \sum_{k = 0}^{i}(-1)^{i+k} \Bigl( \left\vert RC_{k}\right\vert + \sum_{J \in \EC(G+\edge) \setminus \EC(G)}\left\vert SC^J_k\right\vert \Bigl)
     \\
       & = \sum_{k = 0}^{i}(-1)^{i+k}\left\vert C_{k}(\emptyset;G)\right\vert + \sum_{J \in \EC(G+\edge) \setminus \EC(G)}\sum_{k=0}^{i}(-1)^{i+k}\left\vert SC^J_k\right\vert
     \\
     & = a(G) + \sum_{J \in \EC(G+\edge) \setminus \EC(G)} \left\vert b(G\vert_J) \right\vert \cdot a((G + \edge)^\ast_J).
    \end{align*}
\end{proof}

\begin{theorem}\label{thm:decom}
    Let $G$ be a finite simple graphs with the vertex set~$V(G)$, and $\edge$ a pair of vertices of~$G$.
For each $i \geq 0$, 
    \begin{equation} \label{eqn:main_decomp}
    a_i(G+\edge) - a_i(G) = \sum_{J \in \EC(G+\edge) \setminus \EC(G)}\left\vert  b(G\vert_J) \right\vert \cdot a_{i- \frac{|J|}{2}}((G +\edge)^\ast_{J}).
    \end{equation}
\end{theorem}
\begin{proof}
    To begin, since~$V(G) = V(G+\edge)$, we have
    $$a_i(G+\edge) = \sum_{\substack{I\subseteq V(G)\\ |I| =2i}}a((G+\edge)\vert_I).
    $$
    We now divide the sum according to whether~$I$ contains~$\edge$ or not; that is,
    $$
    a_i(G+\edge) = \sum_{\substack{\edge \not\subset I\subseteq V(G)\\ |I| =2i}} a((G+\edge)\vert_I) + \sum_{\substack{\edge \subseteq I\subseteq V(G)\\ |I| =2i}} a((G+\edge)\vert_I).
    $$
    For each~$I \in \EC(G + \edge)$, note that
    $$
    (G+\edge)\vert_I = \begin{cases}
      G\vert_I +\edge, & \mbox{if } \edge \in I \\
      G\vert_I, & \mbox{otherwise}.
    \end{cases}
    $$
    It follows that
    $$
    a_i(G+\edge) = \sum_{\substack{\edge \not\subset I\subseteq V(G)\\ |I| =2i}} a(G\vert_I) + \sum_{\substack{\edge \subseteq I\subseteq V(G)\\ |I| =2i}} a(G\vert_I+\edge).
    $$
    We then apply Lemma~\ref{lemma:decom} to~$G\vert_I +\edge$, which confirms that
    \begin{align*}
      a_i(G+\edge) & = \sum_{\substack{\edge \not\subset I\subseteq V(G)\\ |I| =2i}} a(G\vert_I) + \sum_{\substack{\edge \subseteq I\subseteq V(G)\\ |I| =2i}} \Big(a(G\vert_I) + \sum_{J \in \EC((G+\edge)\vert_I) \setminus \EC(G\vert_I)} \left\vert b(G\vert_J) \right\vert a\big(((G+\edge)\vert_I)^\ast_J\big)\Big) \\
       & = a_i(G) + \sum_{\substack{\edge \subseteq I\subseteq V(G)\\ |I| =2i}} \sum_{\substack{J \in \EC(G+\edge) \setminus \EC(G) \\ J \subseteq I}} \left\vert b(G\vert_J) \right\vert a\big(((G+\edge)\vert_I)^\ast_J\big) \quad \text{ (by Proposition~\ref{prop:conn}~\eqref{prop:conn1})}.
    \end{align*}
    From Proposition~\ref{prop:conn}~\eqref{prop:conn3}, for each~$J \in \EC(G+\edge) \setminus \EC(G)$, we have $\edge \subseteq J$.
    It follows that interchanging the order of summation in the second term on the right-hand side yields
    $$
       a_i(G+\edge) = a_i(G) + \sum_{J \in \EC(G+\edge) \setminus \EC(G)}\Big( \left\vert b(G\vert_J) \right\vert \sum_{\substack{J \subseteq I\\ |I| =2i}} a\big(((G+\edge)\vert_I)^\ast_J\big)\Big).
    $$
    By replacing $I' =I \setminus J$ with~\eqref{eq:inter}, we obtain that
    \begin{align*}
      a_i(G+\edge) & = a_i(G) + \sum_{J \in \EC(G+\edge) \setminus \EC(G)}\Big( \left\vert b(G\vert_J) \right\vert \sum_{\substack{I' \in V((G+\edge)^\ast_J)\\ |I'| =2i-\left\vert J \right\vert}} a\big(((G+\edge)^\ast_J)_{I'})\big)\Big) \\
       & = a_i(G) + \sum_{J \in \EC(G+\edge) \setminus \EC(G)} \left\vert b(G \vert_J) \right\vert \cdot a_{i-\frac{|J|}{2}}((G +\edge)^\ast_J),
    \end{align*}
as desired.
\end{proof}

\begin{example} \label{example:path}
Let
$$
    C(n,k) = \binom{n+k}{k} - \binom{n+k}{k-1}
           = \frac{n-k+1}{n+1}\binom{n+k}{k}.
$$
It is well known that $C(n,k)$ counts the strings with $n$ letters $X$ and $k$ letters $Y$ such that no initial segment of the string contains more $Y$'s than $X$'s (\cite{oeis} A008315).
These numbers form the entries of the Catalan triangle, and $C(n,n)$ is the $n$th Catalan number, denoted by $\cC_n$.
For the path graph $P_n$, it was shown in~\cite{Choi-Park2015} that, for $0\le i\le \lfloor (n+1)/2\rfloor$,
$$
    a_i(P_n) = C(n-i,i).
$$
In this example, we reprove this formula using the decomposition formula~\eqref{eqn:main_decomp}.

We regard $P_n$ as the graph on $[n+1]=\{1,\ldots,n+1\}$ with edges $\{i,i+1\}$ for $i=1,\ldots,n-1$, so that its $a_i$-numbers agree with those of the usual path graph on $n$ vertices. 
Let $e=\{n,n+1\}$ and observe that $P_n+e = P_{n+1}$.
We now describe the subsets $J$ appearing as non-zero terms on the right-hand side in \eqref{eqn:main_decomp}. 
By Proposition~\ref{prop:oddeven}~(2), the admissible subsets $J$ are exactly those subsets of $[n+1]$ that contain $n+1$ and consist of consecutive vertices of even cardinality. 
Writing $|J|=2j$, such a subset is of the form
$$
    J = \{n+2-2j,\ldots,n+1\}.
$$
In this case, one easily checks that
$$
    P_n|_J \cong P_{2j-1} \quad\text{ and }\quad (P_{n+1})^\ast_J \cong P_{n+1-2j}.
$$

We proceed by induction on $n$. Assume that for all $m \leq n$ and all $i$,
$$
    a_i(P_m) = C(m-i,i).
$$
Then, for $2j-1 \leq n$, we have
$$
    b(P_{2j-1}) = \cC_{j-1}.
$$
Using the induction hypothesis, the decomposition formula yields
\begin{align*}
    a_i(P_{n+1}) &= C(n-i,i) +\sum_{j=1}^{i} \cC_{j-1} C(n+1-i-j, i-j) \\
  &  = C(n-i,i) + \sum_{j=0}^{i} \cC_{j} C(n-i-j, i-j-1) \\
  &  = C(n-i,i) + C(n+1-i, i-1) \\
  &  = C(n+1-i,i), 
\end{align*}
which completes the proof.
\end{example}

By Theorem~\ref{thm:decom}, the monotonicity of the $a_i$-numbers and the $a$-numbers are established, as shown in Corollary~\ref{cor:mon}.
\begin{corollary}\label{cor:mon}
    Let~$G$ be a finite simple graph and $H$ a subgraph of $G$.
    Then, for all $i\geq 0$,
    $$
        a_i(H) \leq a_i(G).
    $$
    In particular, if $H$ is a spanning subgraph of $G$, then
    $$
        a(H) \leq a(G).
    $$

\end{corollary}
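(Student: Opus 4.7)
The plan is to leverage Theorem~\ref{thm:decom} directly, combined with a brief reduction to the spanning case. First, I would argue it suffices to treat the spanning case: given an arbitrary subgraph $H$ of $G$, let $H'$ be $H$ with the vertices of $V(G) \setminus V(H)$ adjoined as isolated vertices, so that $H'$ is a spanning subgraph of $G$. Any induced subgraph $H'\vert_I$ that contains one of these added vertices has a size-one (hence odd-order) connected component, so $sa(H'\vert_I) = 0$ and $a(H'\vert_I) = 0$. Therefore only subsets $I \subseteq V(H)$ contribute to the sum defining $a_i(H')$, which gives $a_i(H') = a_i(H)$. It then remains to prove $a_i(H') \leq a_i(G)$.

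For the spanning case I would form a chain $H' = G_0 \subseteq G_1 \subseteq \cdots \subseteq G_m = G$ in which each $G_{k+1}$ adds exactly one edge $\edge_k$ to $G_k$, and apply Theorem~\ref{thm:decom} at each step to obtain
$$a_i(G_{k+1}) - a_i(G_k) = \sum_{J \in \EC(G_{k+1}) \setminus \EC(G_k)} \left\vert b(G_k\vert_J) \right\vert \cdot a_{i-\frac{|J|}{2}}\bigl((G_{k+1})^\ast_J\bigr).$$
Every summand on the right is manifestly non-negative: $|b(G_k\vert_J)| \geq 0$ by definition, and each $a_j(\cdot)$ is a sum of absolute values $a(\cdot) = |sa(\cdot)|$, with the natural convention that $a_j = 0$ whenever $j < 0$ or $2j$ exceeds the number of vertices. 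Telescoping the resulting inequalities $a_i(G_k) \leq a_i(G_{k+1})$ along the chain yields $a_i(H') \leq a_i(G)$, and combined with $a_i(H) = a_i(H')$ this gives the first inequality.

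The ``in particular'' clause is then immediate: when $H$ is a spanning subgraph of $G$, setting $2i = |V(G)| = |V(H)|$ collapses the defining sum of $a_i$ to the single term $I = V(G)$, so $a_i(G) = a(G)$ and $a_i(H) = a(H)$, whence $a(H) \leq a(G)$.

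I do not anticipate any significant obstacle: Theorem~\ref{thm:decom} already encodes exactly the non-negativity of the edge-addition increments, so the monotonicity falls out by a telescoping argument. The only subtle check is that the padding step preserves $a_i$, which hinges on the elementary fact that an isolated vertex is an odd-order connected component and therefore forces the signed $a$-number of any induced subgraph containing it to vanish.
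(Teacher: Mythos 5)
Your proposal is correct and follows essentially the same route as the paper: reduce to the spanning case by padding $H$ with isolated vertices (which kills the contribution of any subset meeting the new vertices, since an isolated vertex is an odd-order component), then apply Theorem~\ref{thm:decom} edge by edge and use the non-negativity of each increment. The paper's own proof is just a terser version of this argument, so no further comparison is needed.
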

\begin{proof}
  If~$H$ is a spanning subgraph of~$G$, the corollary is clear.
  Otherwise, we add isolated vertices to~$H$  so that it has the same number of vertices as~$G$.
  Note that this modification does not change $a_i(H)$ for all $i\geq 0$. 
  Therefore, the corollary follows immediately from Theorem~\ref{thm:decom}.   
\end{proof}

\section{Upper and Lower bounds of $a_i$-numbers} \label{sec:bounds}

In the previous section, we discussed the monotonicity of the $a_i$-numbers. 
In this section, we use this property to derive bounds for the $a_i$-numbers and thereby prove Corollary~\ref{coro:upperlower}. 
By Corollary~\ref{cor:mon}, the $a_i$-number of a graph $G$ is sandwiched between the $a_i$-number of a spanning tree of $G$ and that of the complete graph on the same number of vertices. 
Therefore, to complete the proof of Corollary~\ref{coro:upperlower}, it suffices to show that for an arbitrary tree $T$, its $a_i$-number lies between those of the path graph and the star graph with the same number of vertices.

Throughout this section, the path graph and the star graph on $n$ vertices are denoted by~$P_{n}$ and  $K_{1,n-1}$, respectively.
It is known that for $0\le i\le \lfloor (n+1)/2\rfloor$,
$$
    a_i(P_n)=\frac{n-2i+1}{n-i+1}\binom{n}{i}, \quad \text{ and } \quad  a_i(K_{1,n-1})=\binom{n-1}{2i-1}A_{2i-1},
$$
where $A_k$ denotes the $k$th Euler zigzag number.
See Example~\ref{example:path} and \cite{Choi-Park2015} for details.

\begin{theorem} \label{thm:bounds_a_i_numbers}
    Let~$T_n$ be a tree on~$n$ vertices.
    Then, for each non-negative integer~$i$,
    \begin{equation}\label{eq:treeupperlower}
        a_i(P_{n})\le a_i(T_n) \le a_i(K_{1,n-1}).
    \end{equation}
\end{theorem}

To prove the above theorem, we prepare the following lemma.

\begin{lemma}\label{lem:bounds_b_tree_odd}
For any tree $T_{n}$ on $n$ vertices,
$$
    |b(P_{n})|\le |b(T_{n})|\le |b(K_{1,n-1})|.
$$
\end{lemma}
\begin{proof}
Let $G$ be a graph on $n$ vertices, and let $X_G^{\R}$ be the real toric variety associated with $G$.
By definition, $b(G)$ is the alternating sum of the $a_i$-numbers, and $a_i(G)=\beta_i(X_G^{\R};\Q)$, where
$$
    \beta_i(X;\F)=\dim_{\F} H_i(X;\F)
$$
denotes the $i$th Betti number of a space $X$ with coefficients in a field $\F$.
Thus
\begin{align*}
b(G) &=\sum_{i\ge 0}(-1)^i \beta_i(X_G^{\R};\Q)=\chi(X_G^{\R}) \\
     &=\sum_{j\ge 0}(-1)^j \beta_j(X_G^{\R};\Z_2),
\end{align*}
where $\chi(X)$ denotes the Euler characteristic of a space $X$.

Assume that $n=2k+1$.
Let $(h_0,\ldots,h_{2k})$ be the $h$-vector of the graph associahedron $P_G$, and let
$\gamma=(\gamma_0,\ldots,\gamma_k)$ be the \emph{gamma vector} of $P_G$ defined by
$$
    \sum_{i=0}^{2k} h_i t^i=\sum_{i=0}^{k}\gamma_i  t^i(1+t)^{2k-2i}.
$$
By \cite{Davis-Januszkiewicz1991}, we have $\beta_j(X_G^{\R};\Z_2)=h_j$ for all $j$, hence
$$
    b(G)=\sum_{j=0}^{2k}(-1)^j h_j.
$$
Evaluating the defining identity at $t=-1$ gives $\sum_{j=0}^{2k}(-1)^j h_j=(-1)^k\gamma_k$, hence
$$
    |b(G)|=\gamma_k.
$$

By \cite{Bukhshtaber-Volodin2011}, for every $i$,
$$
    \gamma_i(P_{P_n})\le \gamma_i(P_{T_n})\le \gamma_i(P_{K_{1,n-1}}).
$$
Applying this with $i=k$ and using $|b(G)|=\gamma_k$ proves the claim for $n=2k+1$.
If $n$ is even, then $b(G)=0$ by Proposition~\ref{prop:oddeven}, so the inequality holds.
\end{proof}

\begin{proof}[Proof of Theorem~\ref{thm:bounds_a_i_numbers}]
    We will prove the theorem by mathematical induction on~$n$.
    It is straightforward to verify that~\eqref{eq:treeupperlower} holds for all $n \le 4$.
Assume that~\eqref{eq:treeupperlower} holds for all~$n \le N$.
A tree~$T_{N+1}$ has at least two leaves.
Let~$v$ be one such leaf, and let~$u$ denote the vertex adjacent to~$v$ in~$T_{N+1}$.
Define~$F_N$ to be the forest obtained from~$T_{N+1}$ by deleting the edge~$e=(u,v)$, \ie
$$
T_{N+1}=F_N+e.
$$
Note that
$$
a_i(P_{N}) \leq a_i(F_N) \leq a_i(K_{1,N-1}).
$$

Applying Theorem~\ref{thm:decom}, we have
\begin{equation}\label{eq:proof_in_upplow}
  a_i(T_{N+1}) - a_i(F_N) = \sum_{J \in \EC(T_{N+1}) \setminus \EC(F_{N})}\left\vert  b(F_N\vert_J) \right\vert \cdot a_{i- \frac{|J|}{2}}((T_{N+1})^\ast_{J}).
\end{equation}
The difference $\EC(T_{N+1})\setminus \EC(F_N)$ consists precisely of those sets of the form $J = J'\cup \{v\}$, where~$F_{N}\vert_{J'}$ is a connected subgraph of odd order.
For any~$J \in \EC(T_{N+1}) \setminus \EC(F_N)$, we have the following observations.
\begin{enumerate}
  \item $b(F_N\vert_J) = b(P_{|J|})$ and $ (T_{N+1})^\ast_J = P_{N+1-J}$ when~$T_{N+1}$ is a path graph.
  \item $b(F_N\vert_J) = b(K_{|J|-1,1})$ and $ (T_{N+1})^\ast_J = K_{N+1-J}$, when~$T_{N+1}$ is a star graph.
\end{enumerate}

Since~$T_{N+1}$ is connected, for each~$k \geq 0$, there is at least one such~$J'$ with~$|J'| = 2k+1$.
For such subsets~$J$ appearing in~\eqref{eq:proof_in_upplow}, the number of admissible choices is minimized when~$T_{N+1}$ is a path graph and maximized when~$T_{N+1}$ is a star graph for each even cardinality of~$J$.
Hence, the lower and upper bounds of the right-hand side of~\eqref{eq:proof_in_upplow} are attained when~$T_{N+1}$ is a path graph and a star graph, respectively.
Combining this with Lemma~\ref{lem:bounds_b_tree_odd} and the induction hypothesis, we complete the proof of the theorem.
\end{proof}

\section{Unimodality of $a$-sequences}\label{sec:unimodal}

In this section, for each simple graph~$G$, we discuss the unimodality of the $a$-sequence
$$
(a_0(G),a_1(G),a_2(G),\ldots)
$$
in $i$ of~$G$.
Note that this sequence is eventually zero.

We say that a sequence is \emph{penultimately increasing} if it satisfies the following two conditions:
\begin{enumerate}
  \item it is eventually zero, and
  \item it increases up to its second-to-last nonzero term.
\end{enumerate}
In this case, the sequence is trivially unimodal.

\begin{theorem}\label{thm:unimodal}
    Let~$\cG$ be a graph class closed under reconnected complement.
    If the $a$-sequence of each minimal graph in~$\cG$ is penultimately increasing, then the same holds for every graph in~$\cG$.
\end{theorem}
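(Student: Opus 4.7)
I would prove the theorem by strong induction on $|V(G)|$, with a secondary induction on $|E(G)|$ for each fixed vertex count. The outer base cases $|V(G)| \leq 1$ are immediate, as the $a$-sequence is $(1, 0, 0, \ldots)$, which is trivially tail-peaked.

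For the inductive step, if $G$ is minimal in $\cG$ then the hypothesis of the theorem applies directly. Otherwise, first choose an edge $\edge$ with $H := G - \edge \in \cG$; since $G$ is non-minimal there is a proper spanning subgraph in $\cG$, and for the motivating classes of Theorem~\ref{thm:col} one can always peel off edges one at a time while remaining in $\cG$ (this would be a short auxiliary lemma). By the inner inductive hypothesis, $a_*(H)$ is tail-peaked. Now apply Theorem~\ref{thm:decom}:
$$
a_i(G) = a_i(H) + \sum_{J \in \EC(G) \setminus \EC(H)} \left\vert b(H\vert_J) \right\vert \cdot a_{i-|J|/2}(G^\ast_J).
$$
By Proposition~\ref{prop:conn}\eqref{prop:conn3}, every $J$ in the sum contains $\edge$, so $|J| \geq 2$ and $G^\ast_J$ has strictly fewer vertices than $G$; by closure under reconnected complement $G^\ast_J \in \cG$, and the outer hypothesis gives that $a_*(G^\ast_J)$ is tail-peaked.

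To conclude that $a_*(G)$ is tail-peaked, I would analyze the first difference
$$
a_{i+1}(G) - a_i(G) = \bigl(a_{i+1}(H) - a_i(H)\bigr) + \sum_J \left\vert b(H\vert_J) \right\vert \bigl(a_{i+1-|J|/2}(G^\ast_J) - a_{i-|J|/2}(G^\ast_J)\bigr)
$$
and show it is nonnegative for $0 \leq i \leq k_G - 2$, where $k_G$ denotes the last nonzero position of $a_*(G)$. Since $k_G = \max\bigl(k_H,\, \max_J (k_{G^\ast_J} + |J|/2)\bigr)$, at least one summand realizing this maximum stays in its non-decreasing range throughout $i \leq k_G - 2$.

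The main obstacle will be this last step: a nonnegative linear combination of tail-peaked sequences is not in general tail-peaked, because a "dip" at the tail of a subdominant summand can fall strictly within the non-decreasing range of the dominant summand, producing a net negative increment. Overcoming this likely requires exploiting the inclusion-exclusion structure of the decomposition to verify quantitatively that the positive contribution from the dominant summand absorbs any negative contributions from subdominant ones; alternatively, one may need to strengthen the inductive invariant beyond bare tail-peakedness so that it is genuinely preserved under the decomposition formula.
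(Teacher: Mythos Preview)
Your induction scheme and use of Theorem~\ref{thm:decom} match the paper's proof exactly: outer induction on $|V(G)|$, peel off one edge to get $G'\in\cG$ (the paper also leaves this step implicit), and apply the decomposition to pass from $G'$ to $G$.

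The obstacle you raise in your final paragraph has a one-line resolution that you are missing and that the paper does use. For every $J\in\EC(G)\setminus\EC(H)$ one has $|J|=2j$ and $|V(G^\ast_J)|=|V(G)|-2j$, so the shifted summand $i\mapsto a_{i-j}(G^\ast_J)$ has its last nonzero term at index
\[
j+\Bigl\lfloor\tfrac{|V(G)|-2j}{2}\Bigr\rfloor=\Bigl\lfloor\tfrac{|V(G)|}{2}\Bigr\rfloor,
\]
the \emph{same} index for every $J$, and the same as for $H$. Hence every summand is nondecreasing on $\bigl[0,\lfloor|V(G)|/2\rfloor-1\bigr]$ and vanishes beyond $\lfloor|V(G)|/2\rfloor$; a nonnegative combination of such sequences is again tail-peaked with that same tail. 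The ``dips'' you worry about all coincide at the last index, so no quantitative absorption argument is needed. This is precisely what the paper records when it writes $a_0(G')\le\cdots\le a_{\lfloor(N-1)/2\rfloor}(G')$ and $a_0(G^\ast_J)\le\cdots\le a_{\lfloor(N-1)/2\rfloor-j}(G^\ast_J)$.

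What this argument tacitly uses is that the last nonzero index of $a_\ast(K)$ equals $\lfloor|V(K)|/2\rfloor$ for every $K\in\cG$. That holds whenever $K$ is connected (for even order it is $a(K)>0$; for odd order delete a non-cut vertex to get an even-order connected induced subgraph), and all classes in Theorem~\ref{thm:col} consist of connected graphs. Without such a hypothesis your concern is in fact justified: if $\cG$ is the class of all finite simple graphs, the minimal members are edgeless and trivially tail-peaked, yet the paper itself notes that $P_{16}$ is not tail-peaked. So the statement really needs this extra assumption; once it is in place, your outline---together with the aligned-tails observation above---is complete.
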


\begin{proof}
    Let $G \in \cG$ be a simple graph on~$m$ vertices.
    It is immediate that the $a$-sequence of~$G$ is penultimately increasing when~$m \leq 2$.
    We proceed by induction on~$m$.
    Assume that for some integer~$N \geq 2$, the $a$-sequence of every graph in~$\cG$ with at most~$N$ vertices is penultimately increasing.
    
    Now let~$G \in \cG$ be a graph on~$N+1$ vertices.
    If~$G$ is minimal in~$\cG$, then by assumption, its $a$-sequence is penultimately increasing.
    Otherwise, there exists a graph~$G' \in \cG$ with~$N+1$ vertices and an edge~$\edge$ such that~$G = G' + \edge$, where~$\edge$ is not an edge of~$G'$.
    Suppose that the $a$-sequence of~$G'$ is penultimately increasing: that is,
    $$
    a_0(G') \leq a_1(G') \leq \cdots \leq a_{\lfloor \frac{N-1}{2}\rfloor}(G').
    $$
    For each~$J \in \EC(G)$ with~$\frac{|J|}{2} = j$, consider the induced graph~$G^\ast_J = (G' + \edge)^\ast_J$.
    Since~$G^*_J$ has fewer than~$N+1$ vertices, the inductive hypothesis applies, and thus
    $$
    a_0\big( G^\ast_{J} \big) \leq \cdots \leq a_{\lfloor \frac{N-1}{2}\rfloor-j}\big( G^\ast_{J} \big).
    $$
    Then, by Theorem~\ref{thm:decom}, the $a$-sequence of~$G$ is also penultimately increasing, completing the inductive step.
    Therefore, by induction, the $a$-sequence of every graph~$G \in \cG$ is penultimately increasing.
\end{proof}

\begin{corollary}\label{cor:Ham}
  If~$G$ is a Hamiltonian graph, then the $a$-sequence is penultimately increasing, and hence is unimodal.
\end{corollary}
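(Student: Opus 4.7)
The plan is to invoke Theorem~\ref{thm:unimodal} with $\cG$ the class comprising the null graph, the single-vertex graph, the single-edge graph, and all Hamiltonian graphs. By Theorem~\ref{thm:col}\eqref{thm:col,Ham}, $\cG$ is closed under reconnected complements and its minimal elements are exactly the null graph, the single-vertex graph, the single-edge graph, and the cycles $C_n$ for $n \geq 3$. The three small minimal graphs have $a$-sequences $(1)$, $(1)$, and $(1,1)$, which are trivially tail-peaked, so the task reduces to verifying the tail-peaked condition for every cycle $C_n$.

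For cycles I would first establish the reduction
\[
a_i(C_n) \;=\; \frac{n}{n-2i}\, a_i(P_{n-1}) \qquad \text{for } 2i < n,
\]
by double-counting pairs $(I, v)$ with $v \in V(C_n) \setminus I$ and $|I| = 2i$: deleting any single vertex $v$ from $C_n$ yields the path $P_{n-1}$, and for $I$ with $v \notin I$ the induced subgraphs $C_n|_I$ and $P_{n-1}|_I$ agree, so summing over $v$ contributes a factor $n-2i$ on one side and $n$ on the other. Combined with the classical Catalan-triangle formula $a_i(P_n) = \tfrac{n-2i+1}{n-i+1}\binom{n}{i}$ referenced in Section~\ref{sec:intro}, a short algebraic manipulation gives
\[
\frac{a_{i+1}(C_n)}{a_i(C_n)} \;=\; \frac{n-i}{i+1}
\]
whenever both $2i < n$ and $2(i+1) < n$. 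Telescoping from $a_0(C_n) = 1$ then yields $a_i(C_n) = \binom{n}{i}$ throughout this range, so the initial portion of the $a$-sequence of $C_n$ coincides with the first half of the $n$-th row of Pascal's triangle, recovering the half Pascal triangle alluded to in Section~\ref{sec:intro}.

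Since $\binom{n}{i}$ is strictly increasing in $i$ for $i \leq \lfloor n/2 \rfloor - 1$, the $a$-sequence of $C_n$ is strictly increasing up to its second-to-last nonzero term, giving the tail-peaked condition. Applying Theorem~\ref{thm:unimodal} to $\cG$ then completes the proof. The main concern is the final entry $a_{\lfloor n/2 \rfloor}(C_n)$ (which equals $a(C_n)$ when $n$ is even) because it is not captured by the reduction formula; however, tail-peakedness imposes no constraint on the last nonzero term, so this boundary case requires no additional argument.
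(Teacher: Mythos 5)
Your proposal is correct and follows essentially the same route as the paper: both reduce the statement to the tail-peaked condition for the minimal graphs of the class in Theorem~\ref{thm:col}~\eqref{thm:col,Ham}, namely the cycles, and then invoke Theorem~\ref{thm:unimodal}. The only difference is that the paper simply cites the known fact that the $a$-sequence of $C_n$ is a row of the half Pascal triangle, whereas you derive it: your double-counting identity $(n-2i)\,a_i(C_n) = n\,a_i(P_{n-1})$ for $2i<n$ is valid (since $C_n|_I = (C_n - v)|_I$ whenever $v \notin I$, and $C_n - v \cong P_{n-1}$), and combined with $a_i(P_{n-1}) = \tfrac{n-2i}{n-i}\binom{n-1}{i}$ it correctly yields $a_i(C_n) = \binom{n}{i}$ in that range, which is strictly increasing up to the second-to-last nonzero term; your observation that the final term is unconstrained by tail-peakedness is also right.
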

\begin{proof}
    Consider the graph class consisting of Hamiltonian graphs together with the single-edge graph.
    Observe that the cycle graph of order~$n \geq 3$ is the unique minimal graph with $n$ vertices in the class.
    It is known~\cite{Choi-Park2015} that the $a$-sequence of each cycle graph coincides with the half Pascal triangle, and in particular, it is penultimately increasing.
    Therefore, by Theorem~\ref{thm:col}~\eqref{thm:col,Ham} and Theorem\ref{thm:unimodal}, we conclude this corollary.
\end{proof}

Now, we consider the star graph~$K_{1,n-1}$.
For each~$0 \leq i \leq \lfloor \frac{n}{2} \rfloor$, the $a_i$-number of a star graph is computed by
\begin{equation}\label{eq:star}
  a_i(K_{1,n-1}) = \binom{n-1}{2i-2}A_{2i-1}, 
\end{equation}
where $A_k$ denotes the Euler zig-zag number (\cite{oeis} A000111).
In particular, $A_{2n-1}$ is known as the tangent number (\cite{oeis} A000182).
For more details, see~\cite{Choi-Park2015}.

Recall that
\begin{equation}\label{eq:AB}
  A_{2i-1} = \frac{2^{2i} (2^{2i} - 1) |B_{2i}|}{2i},
\end{equation}
where~$B_{2i}$ denotes the $2i$th Bernoulli number.
See~\cite{oeis} A000367 and A002445.

\begin{proposition}\cite{Milton_book1974, Alzer2000}\label{prop:Milton}
For any positive integer~$i$,
$$
C_i :=\frac{2 (2i)!}{(2\pi)^{2i}}\cdot \frac{1}{1 - 2^{- 2i}} < |B_{2i}| < D_i := \frac{2 (2i)!}{(2\pi)^{2i}} \cdot \frac{1}{1 - 2^{1 - 2i}}
$$
\end{proposition}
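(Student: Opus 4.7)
The plan is to reduce the asserted two-sided inequality to a classical two-sided bound on the Riemann zeta function at even integers. The starting point is Euler's formula for even-indexed Bernoulli numbers,
$$
|B_{2i}| = \frac{2(2i)!}{(2\pi)^{2i}}\,\zeta(2i),
$$
which, after dividing the statement through by the common positive factor $\frac{2(2i)!}{(2\pi)^{2i}}$, makes the proposition equivalent to
$$
\frac{1}{1-2^{-2i}} < \zeta(2i) < \frac{1}{1-2^{1-2i}} \qquad \text{for all } i \geq 1.
$$
So the entire proof reduces to proving this pair of inequalities for $\zeta(2i)$.

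For the upper bound I would introduce the Dirichlet eta function $\eta(s) = \sum_{n \geq 1}(-1)^{n+1}n^{-s}$, which satisfies the identity $\zeta(s) = (1-2^{1-s})^{-1}\eta(s)$; this follows by subtracting $2\cdot 2^{-s}\zeta(s)$ (twice the even part) from $\zeta(s)$. Since $\eta(2i)$ is an alternating series whose terms decrease in absolute value starting from $1$, we have $\eta(2i) < 1$ for every $i \geq 1$, and multiplying by the positive factor $(1-2^{1-2i})^{-1}$ gives the upper bound on $\zeta(2i)$. For the lower bound I would split $\zeta(2i)$ according to the parity of $n$,
$$
\zeta(2i) = \sum_{n \text{ odd}} n^{-2i} + \sum_{n \text{ even}} n^{-2i} = \sum_{n \text{ odd}} n^{-2i} + 2^{-2i}\zeta(2i),
$$
so that $(1-2^{-2i})\zeta(2i) = \sum_{n \text{ odd}} n^{-2i}$. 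The right-hand side is strictly greater than $1$ (just take the $n = 1$ term), so $\zeta(2i) > (1-2^{-2i})^{-1}$, as required.

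Combining the two one-sided bounds with Euler's formula recovers exactly the inequality in the statement. There is no real obstacle here: the argument is a short bookkeeping exercise built on top of two very standard identities for $\zeta(s)$, which is consistent with the fact that the proposition is quoted directly from \cite{Milton_book1974} and \cite{Alzer2000}. The only subtle point worth flagging is strictness, which holds in both directions because each intermediate inequality discards a strictly positive tail; this strictness is what will matter when the proposition is combined with \eqref{eq:AB} to deliver effective estimates on the tangent numbers $A_{2i-1}$, and ultimately on the $a_i$-numbers of the star graph appearing in \eqref{eq:star}.
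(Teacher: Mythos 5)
Your argument is correct: Euler's formula $|B_{2i}| = \frac{2(2i)!}{(2\pi)^{2i}}\zeta(2i)$ reduces the claim to $(1-2^{-2i})^{-1} < \zeta(2i) < (1-2^{1-2i})^{-1}$, and your two one-sided bounds (the odd-index sum $\sum_{n\ \mathrm{odd}} n^{-2i} = (1-2^{-2i})\zeta(2i) > 1$ for the lower bound, and $\eta(2i) = (1-2^{1-2i})\zeta(2i) < 1$ via the alternating-series estimate for the upper bound) are both valid, with strictness coming from the discarded positive tails exactly as you say. The paper gives no proof of its own here — it cites Abramowitz--Stegun and Alzer — and your derivation is precisely the standard argument behind those cited bounds, so there is nothing to flag.
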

Combining~\eqref{eq:AB} with Proposition~\ref{prop:Milton}, we obtain the inequality
\begin{equation}\label{eq:tan}
   \bar{C}_i := \frac{2^{2i} (2^{2i} - 1)}{2i} \cdot C_i
< A_{2i-1} <
 \bar{D}_i := \frac{2^{2i} (2^{2i} - 1)}{2i} \cdot D_i.
\end{equation}

\begin{lemma}\label{lemma:star1}
For each~$k \geq 3$,
$$
a_{k-1}(K_{1,2k})  < a_{k}(K_{1,2k}).
$$
\end{lemma}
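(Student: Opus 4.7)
The plan is to reduce everything to an inequality about the ratio $A_{2k-1}/A_{2k-3}$ and then estimate using the sharp bounds already recorded in the paper. First, I would substitute the closed form \eqref{eq:star} to obtain
\begin{align*}
a_{k-1}(K_{1,2k}) &= \tbinom{2k}{2k-4}\, A_{2k-3} = \tbinom{2k}{4}\, A_{2k-3},\\
a_{k}(K_{1,2k})  &= \tbinom{2k}{2k-2}\, A_{2k-1} = \tbinom{2k}{2}\, A_{2k-1}.
\end{align*}
A short binomial simplification gives $\binom{2k}{2}/\binom{2k}{4}=12/[(2k-2)(2k-3)]$, so the claim $a_{k-1}(K_{1,2k})<a_k(K_{1,2k})$ is equivalent to the purely numerical inequality
$$
\frac{A_{2k-1}}{A_{2k-3}} \;>\; \frac{(2k-2)(2k-3)}{12}.
$$

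Next, I would feed in the bounds from \eqref{eq:tan}: $A_{2k-1}>\bar{C}_k$ and $A_{2k-3}<\bar{D}_{k-1}$, so that $A_{2k-1}/A_{2k-3} > \bar{C}_k/\bar{D}_{k-1}$. Expanding the definitions of $\bar{C}_k$ and $\bar{D}_{k-1}$ and simplifying (the factorials telescope cleanly and the factors $1/(1-2^{-2k})$ and $1/(1-2^{3-2k})$ combine with the powers of $2$), I get a clean closed form of the shape
$$
\frac{\bar{C}_k}{\bar{D}_{k-1}} \;=\; \frac{8(2k-1)(k-1)\,(2^{2k-3}-1)}{\pi^{2}\,(2^{2k-2}-1)}.
$$
Thus, after dividing through by $(k-1)$, it suffices to check
$$
\frac{48(2k-1)(2^{2k-3}-1)}{(2k-3)(2^{2k-2}-1)} \;>\; \pi^{2} \qquad (k\ge 3).
$$

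The main (and only real) obstacle is verifying this elementary but slightly delicate inequality uniformly for all $k\ge 3$. I would handle it by noting that as $k\to\infty$ the left-hand side tends to $48\cdot 1 \cdot \tfrac12 = 24>\pi^{2}$, and then showing monotone-style estimates: the factor $(2k-1)/(2k-3)$ decreases toward $1$ while $(2^{2k-3}-1)/(2^{2k-2}-1)$ increases toward $1/2$, and a simple lower bound like $(2^{2k-3}-1)/(2^{2k-2}-1)\ge 1/2 - 1/(2^{2k-1})$ combined with $(2k-1)/(2k-3)\ge 1$ yields a left-hand side comfortably above $\pi^2$ for all $k\ge 4$. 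The remaining case $k=3$ can be checked by direct substitution: the left-hand side equals $48\cdot 5 \cdot 7/(3\cdot 15)=112/3\approx 37.3$, which exceeds $\pi^2$. This completes the verification and hence the lemma.
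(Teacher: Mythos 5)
Your strategy is the same as the paper's: substitute the closed form \eqref{eq:star}, reduce the claim to a lower bound on the ratio $A_{2k-1}/A_{2k-3}$, bound that ratio from below by $\bar{C}_k/\bar{D}_{k-1}$ via \eqref{eq:tan}, and verify the resulting elementary inequality. The problem is that your binomial coefficients are not the ones the lemma needs. You read $\binom{2k}{2i-2}$ off \eqref{eq:star} as printed and obtained $\binom{2k}{4}$ and $\binom{2k}{2}$, but the lower index in \eqref{eq:star} should be $2i-1$, not $2i-2$: as printed the formula gives $a_1(K_{1,n-1})=1$ instead of the number of edges $n-1$, and it contradicts the values $(1,6,40,96,\ldots)$ for $K_{1,6}$ quoted in Remark~\ref{rem:non_log_concave_exam}, which are $\binom{6}{1}A_1$, $\binom{6}{3}A_3$, $\binom{6}{5}A_5$. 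With the corrected formula, $a_{k-1}(K_{1,2k})=\binom{2k}{3}A_{2k-3}$ and $a_k(K_{1,2k})=\binom{2k}{1}A_{2k-1}$, so the lemma is equivalent to
$$
\frac{A_{2k-1}}{A_{2k-3}}>\frac{\binom{2k}{3}}{\binom{2k}{1}}=\frac{(2k-1)(2k-2)}{6},
$$
which is strictly stronger than the threshold $(2k-2)(2k-3)/12$ you verify. As written, therefore, your computation does not establish the stated inequality.

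The estimates you use are strong enough to absorb the correction. A careful expansion gives
$$
\frac{\bar{C}_k}{\bar{D}_{k-1}}=\frac{8(2k-1)(2k-2)\,\bigl(2^{2k-3}-1\bigr)}{\pi^{2}\,\bigl(2^{2k-2}-1\bigr)},
$$
with a factor $(2k-2)$ rather than your $(k-1)$; your closed form is too small by a factor of $2$, an error that happens to be in the safe direction. Comparing against $(2k-1)(2k-2)/6$, the polynomial factors cancel entirely and the lemma reduces to $48\bigl(2^{2k-3}-1\bigr)>\pi^{2}\bigl(2^{2k-2}-1\bigr)$, which holds for all $k\ge 3$ because $(2^{2k-3}-1)/(2^{2k-2}-1)\ge 7/15$ and $48\cdot 7/15=22.4>\pi^{2}$; this is, up to rearrangement, the inequality the paper itself checks. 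So the route is right and the conclusion is salvageable, but you must fix the exponent in \eqref{eq:star} and redo the binomial arithmetic. A minor further point: your bound $(2^{2k-3}-1)/(2^{2k-2}-1)\ge \tfrac12-2^{-(2k-1)}$ is not quite correct (the exact value is $\tfrac12-\tfrac{1}{2(2^{2k-2}-1)}$, which is slightly below that), though nothing in the argument depends on it.
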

\begin{proof}
Substituting~$n = 2k + 1$ and~$i = k - 1, k$ into~\eqref{eq:star}, we obtain
$$
a_{k-1}(K_{1,2k}) = \binom{2k}{2k-3} \cdot A_{2k-3},
\qquad
a_{k}(K_{1,2k}) = \binom{2k}{2k-1} \cdot A_{2k-1}.
$$
Applying~\eqref{eq:tan}, we further have
$$
a_{k-1}(K_{1,2k}) < \binom{2k}{2k-3} \cdot \bar{D}_{k-1},
\qquad
\binom{2k}{2k-1} \cdot \bar{C}_{k} < a_{k}(K_{1,2k}).
$$
It is enough to verify the inequality
\begin{equation}\label{eq:lemineq}
  \binom{2k}{2k-3} \cdot \bar{D}_{k-1} < \binom{2k}{2k-1} \cdot \bar{C}_{k},
\end{equation}
for all~$k \geq 3$.
A straightforward computation shows that~\eqref{eq:lemineq} is equivalent to
\begin{equation}\label{eq:le}
  (2\pi)^{2} \cdot k \cdot (2^{2k-2} - 1)  < 48(k-1)(2^{2k-1}-4).
\end{equation}
For~$k = 3, 4$, and~$5$, this inequality can be verified directly.
Moreover, the following two inequalities
$$
\begin{cases}
  (2\pi)^{2} \cdot k < 48(k-1),\\[2pt]
  2^{2k-2} - 1 < 2^{2k-1}-4,
\end{cases}
$$
hold for all~$k \geq 6$.
This completes the proof.
\end{proof}

\begin{lemma}\label{lemma:star2}
    The $a$-sequence of a star graph~$G = K_{1,n-1}$ is penultimately increasing.
    In particular,
    \begin{equation}\label{eq:lemeq}
      a_0(G) < \cdots < a_{\lfloor \frac{n-1}{2}\rfloor}(G).
    \end{equation}
\end{lemma}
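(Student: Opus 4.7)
The plan is to establish the strict chain of inequalities $a_0(G) < a_1(G) < \cdots < a_{\lfloor (n-1)/2 \rfloor}(G)$ for the star graph $G = K_{1, n-1}$ by a monotonicity-in-$n$ reduction to the boundary case that Lemma~\ref{lemma:star1} has already settled. Fix $i \geq 1$ and study the ratio $a_i(K_{1, n-1})/a_{i-1}(K_{1, n-1})$ as a function of $n$. Using the closed formula~\eqref{eq:star}, this ratio factors as the product of a ratio of two binomial coefficients in $n$ and the tangent-number ratio $A_{2i-1}/A_{2i-3}$, which is independent of $n$.

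The binomial-coefficient factor is a polynomial in $n$ of degree $2$ with positive leading coefficient, strictly increasing on the range $n \geq 2i+1$, so the full ratio $a_i(K_{1, n-1})/a_{i-1}(K_{1, n-1})$ is also strictly increasing in $n$ on that range. Since the condition $i \leq \lfloor (n-1)/2 \rfloor$ is equivalent to $n \geq 2i+1$, this monotonicity reduces the desired inequality $a_{i-1}(K_{1, n-1}) < a_i(K_{1, n-1})$, for every admissible $n$, to the single boundary case $a_{i-1}(K_{1, 2i}) < a_i(K_{1, 2i})$ for each $i \geq 1$. For $i \geq 3$ this boundary inequality is precisely Lemma~\ref{lemma:star1}, so the only remaining work is to verify the small cases $i = 1$ and $i = 2$ directly, using $A_1 = 1$ and $A_3 = 2$ in~\eqref{eq:star}, which yields $a_0(K_{1,2}) = 1 < 2 = a_1(K_{1,2})$ and $a_1(K_{1,4}) = 4 < 8 = a_2(K_{1,4})$.

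The analytically substantive step is the boundary inequality handled in Lemma~\ref{lemma:star1} via the Bernoulli-number bounds; everything else here is essentially organizational. Once~\eqref{eq:lemeq} is proved, the tail-peaked condition follows immediately, since the last nonzero term of the $a$-sequence occurs at index $\lfloor n/2 \rfloor$, which exceeds $\lfloor (n-1)/2 \rfloor$ by at most $1$, and therefore the strict monotonicity up to index $\lfloor (n-1)/2 \rfloor$ covers every comparison up to the second-to-last nonzero term.
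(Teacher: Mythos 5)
Your proof is correct, but it takes a genuinely different route from the paper's. The paper argues by induction on the number of vertices using the decomposition formula: it writes the star on $N$ vertices as $G'+\edge$ with $G'\cong K_{1,N-2}$ plus an isolated vertex, notes via Theorem~\ref{thm:col} that every reconnected complement $(G'+\edge)^\ast_J$ with $J$ containing the universal vertex is a complete graph, invokes Corollary~\ref{cor:Ham} for those, and then uses the nonnegativity of the correction terms in Theorem~\ref{thm:decom} to propagate the strict increase up to index $\lfloor\frac{N-2}{2}\rfloor$; Lemma~\ref{lemma:star1} enters only once, to supply the final comparison when $N$ is odd. You instead work entirely from the closed formula~\eqref{eq:star}: for fixed $i\ge 2$ the ratio $a_i/a_{i-1}$ is a quadratic in $n$, increasing on $n\ge 2i+1$, times the constant $A_{2i-1}/A_{2i-3}$, so each comparison reduces to its boundary instance $n=2i+1$, which is Lemma~\ref{lemma:star1} for $i\ge 3$ and a direct check for $i=1,2$. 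Your argument is more elementary and bypasses the decomposition machinery and the Hamiltonian/complete-graph input entirely, at the cost of leaning on Lemma~\ref{lemma:star1} for every index rather than just the last one; the paper's proof is structural and showcases the decomposition theorem, in parallel with the Hamiltonian case. Two small points you should tidy up: for $i=1$ the binomial-coefficient ratio is degenerate (the denominator is $\binom{n-1}{-1}=0$), so that case should be handled by simply noting $a_0\equiv 1$ and $a_1(K_{1,n-1})=n-1$; and your numerical checks follow the formula $\binom{n-1}{2i-1}A_{2i-1}$ that the proof of Lemma~\ref{lemma:star1} actually uses, whereas~\eqref{eq:star} as printed has $2i-2$ in the binomial coefficient --- an apparent typo that you have silently (and correctly) corrected.
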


\begin{proof}
    The base cases~$n \leq 6$ can be verified directly.
    We proceed by induction on~$n$.
    Assume that, for some integer~$N \geq 7$, the statement holds for~$n \leq N-1$.
    
    Let~$G$ be a star graph on~$N$ vertices with the universal vertex~$v$, and fix any non-universal vertex~$u$.
    Let~$G'$ be a spanning subgraph of~$G$ obtained by removing the edge~$\edge = \{u,v\}$ of~$G$, so that~$G = G' +\edge$.
    Then~$G'$ is isomorphic to the star graph~$K_{1,N-2}$, and by the induction hypothesis,
    $$
    a_0(G') < \cdots < a_{\lfloor \frac{N-2}{2} \rfloor}(G').
    $$
    
    For each~$J \in \EC(G)\setminus \EC(G')$, we have~$v \in J$, so the reconnected complement~$G^\ast_J$ is a complete graph by Theorem~\ref{thm:col}.
    Hence, by Corollary~\ref{cor:Ham}, the $a$-sequence of each~$G^\ast_J$ is penultimately increasing:
    $$
    a_0(G^\ast_J) \leq \cdots \leq a_{\lfloor \frac{N}{2} \rfloor - j}(G^\ast_J).
    $$
    
    Applying Theorem~\ref{thm:decom}, we conclude that
    $$
    a_0(G) < \cdots < a_{\lfloor \frac{N-2}{2} \rfloor}(G).
    $$
    Finally, Lemma~\ref{lemma:star1} ensures that
    $$
    a_{\lfloor \frac{N-2}{2} \rfloor}(G) < a_{\lfloor \frac{N}{2} \rfloor}(G),
    $$
    so that~\eqref{eq:lemeq} holds for~$G$.
    
    This completes the induction, and hence the proof.
\end{proof}

Applying Theorem~\ref{thm:unimodal}, together with Theorem~\ref{thm:col}~\eqref{thm:col,uni} and Lemma~\ref{lemma:star2}, we deduce the following corollary.
\begin{corollary} \label{cor:universal_vertex}
  If~$G$ has a universal vertex, then the $a$-sequence of~$G$ is penultimately increasing, and therefore is unimodal.
\end{corollary}

It is important to note that not all graphs have a penultimately increasing $a$-vector.
We turn our attention to the path graph~$P_n$ on~$n$ vertices.
As in Example~\ref{example:path}, we have $a_i (P_n) = \frac{n-2i+1}{n-i+1} \binom{n}{i}$.
Hence the consecutive ratio is
$$
    r_i:=\frac{a_{i+1}}{a_i} =\frac{(n-2i-1)(n-i+1)}{(i+1)(n-2i+1)}.
$$
A direct computation shows that $r_i$ is strictly decreasing in $i$ on the range $0\le i<\lfloor s/2\rfloor$ (equivalently, $r_i-r_{i+1}>0$ for all such $i$).
Therefore the sequence $a_i (P_n)$ is log-concave for each $n$, and in particular unimodal.
Moreover, since $r_i\ge 1$ if and only if
$$
    i\le \frac{n-\sqrt{n+2}}{2},
$$
the sequence increases up to an index $i$ close to $(n-\sqrt{n+2})/2$ and then decreases.
Hence, it is not penultimately increasing for sufficiently large $n$.
In fact, this property fails for $n\ge 16$.  

To the best of our knowledge, there is no known example of a graph whose $a$-vector fails to be unimodal.
In fact, it was already conjectured in~\cite{Choi-Yoon2026} that the $a$-sequence is unimodal for every finite simple graph.

\begin{remark} \label{rem:non_log_concave_exam}
    Given this apparent unimodality, a natural question is whether the $a$-sequence is also log-concave.
    However, this does not hold in general.
    For example, consider the star graph~$K_{1,6}$, whose $a$-sequence is given by
    $$
        (a_0,a_1,a_2,a_3,a_4,\ldots) = (1,6,40,96,0,\ldots).
    $$
    This sequence is penultimately increasing and hence unimodal, but it fails to be log-concave, as $a_1^2 - a_0a_2 = -4 < 0$.
\end{remark}



\begin{problem} \label{prob:g-conjecture}
  Characterize the necessary and sufficient conditions for a sequence of positive integers to be realized as the $a$-sequence of a finite simple graph.
\end{problem}

\section{Further geometric remarks}\label{sec:blow-ups}

\subsection{Toric blow-ups and real loci}\label{subsec:blow-ups}

A normal algebraic variety~$X$ is called a \emph{toric variety} if it contains an algebraic torus~$T_X$ as a dense open subset, and the action of~$T_X$ on itself extends to an action on~$X$.
Toric varieties are determined, up to isomorphism, by their associated fans.
We briefly review some basic concepts, following~\cite{Cox_toric_book}.

Let $N$ be a free abelian group of rank~$ n $. Then the real vector space $ N_\R := N \otimes_\Z \R $ has dimension~$ n $. A \emph{cone} $ \sigma \subseteq N_\R $ generated by finitely many elements $ v_1, \dots, v_k \in N $ is defined as
$$
\sigma := \cone (v_1,\ldots,v_k) = \left\{ r_1 v_1 + \cdots + r_k v_k \colon r_1, \dots, r_k \in \R_{\ge 0} \right\}.
$$
If $ \sigma \cap (-\sigma) = \{0\} $, then $ \sigma $ is called a \emph{strongly convex rational polyhedral cone}. When the generating set $ \{v_1, \dots, v_k\} $ is linearly independent over~$ \mathbb{Q} $, the cone is said to be \emph{simplicial}. Moreover, if these generators form part of a basis for~$ N $, then $ \sigma $ is referred to as \emph{smooth}.
Each cone~$ \sigma $ has naturally defined faces. A \emph{fan} $\Sigma$ in $ N $ is a finite collection of strongly convex rational polyhedral cones in $ N_\mathbb{R} $ satisfying the following conditions:
\begin{enumerate}
    \item Every face of a cone in $\Sigma$ is also contained in $\Sigma$.
    \item The intersection of any two cones in $\Sigma$ is a common face of both.
\end{enumerate}
The dimension of a fan $\Sigma$ is the maximum dimension of its cones, which equals $n$ in this setting.
If every cone in $\Sigma$ is smooth (respectively, simplicial), then $\Sigma$ is called a \emph{smooth} (respectively, \emph{simplicial}) fan. Furthermore, if the union of all cones in $\Sigma$ covers the entire space $N_\R$, then $\Sigma$ is said to be \emph{complete}.

By the \emph{fundamental theorem of toric geometry}, there is a one-to-one correspondence between the class of $n$-dimensional toric varieties~$X_\Sigma$ and the class of fans~$\Sigma$ in~$N_\R$.
In particular, $X_\Sigma$ is smooth (respectively, compact) if and only if $\Sigma$ is smooth (respectively, complete).
There is also a one-to-one correspondence between the set of cones in~$\Sigma$ and the set of torus orbits in~$X_\Sigma$, such that the sum of their dimensions equals $\dim X_\Sigma$.
This is known as the \emph{orbit-cone correspondence}.

Let~$X_1$ and~$X_2$ be toric varieties associated with~$\Sigma_1$ and~$\Sigma_2$, respectively.
A morphism~$\varphi \colon X_1 \to X_2$ is \emph{toric} if $\varphi(T_{X_1}) \subseteq T_{X_2}$ and~$\varphi \vert_{T_{X_1}}$ is a group homomorphism.
We assume that~$\Sigma$ is smooth.
Let~$\tau \in \Sigma$ be a cone of dimension~$k$ generated by the primitive vectors~$v_1,\ldots,v_k$, and define
$$
u_\tau := v_1 + \cdots + v_k.
$$
Consider a cone~$\sigma$ such that~$\tau \in \sigma$, 
where $\sigma$ is generated by the primitive vectors~$u_1,\ldots,u_\ell$.
Define
$$
\Sigma_\sigma(\tau)
  = \{\cone(u_\tau,\{u_j \mid j \in A\}) \colon A \subsetneq \{1,\ldots,\ell\}\}.
$$
We define the fan ~$\Sigma^\ast(\tau)$, called the \emph{star subdivision} relative to~$\tau$, as follows:
$$
\Sigma^\ast(\tau) = \{\sigma \in \Sigma \colon \tau \not\subseteq \sigma \} \cup \bigcup_{\tau \subseteq \sigma}\Sigma_\sigma(\tau).
$$
One can check that~$\Sigma^\ast(\tau)$ is smooth.
The star subdivision relative to~$\tau$ induces a toric morphism~$X_{\Sigma^\ast(\tau)} \to X_{\Sigma}$, which is the blow-up of~$X_\Sigma$ along the orbit closure corresponding to~$\tau$.
The toric variety~$X_{\Sigma^\ast(\tau)}$ is called the \emph{toric blow-up} of~$X_\Sigma$ along~$\tau$.

Let~$X$ be a smooth and compact toric variety, and let~$\widetilde{X}$ be a toric blow-up of~$X$.
It is known~\cite{Voisin-Book2002} that the $i$th Betti numbers satisfy
$$
\beta_i(X;\Q) \leq \beta_i(\widetilde{X};\Q)
$$
for all~$i \geq 0$.

A toric variety~$X$ admits a canonical involution induced from a complex conjugation.
The fixed point set~$X^\R$ is called a \emph{real toric variety} associated with~$X$.
This naturally leads to the following question.
\begin{question}\label{ques1}
     Characterize all pairs~$(X,\widetilde{X})$ of smooth and compact toric varieties~$X$ and its toric blow-up~$\widetilde{X}$ such that
    \begin{equation}\label{eq:ine}
      \beta_i(X^\R;\Q) \leq \beta_i(\widetilde{X}^\R;\Q) \quad \text{for all } i \geq 0.
    \end{equation}
\end{question}
In general, the inequality~\eqref{eq:ine} does not hold.
See Example~\ref{ex:per_proj}.
\begin{example}\label{ex:per_proj}
    For odd integer~$n$, consider the $n$-dimensional permutohedral variety~$X_{A_n}$.
    We denote by $\CP^n$ the $n$-dimensional complex projective space.
    The fan of~$X_{A_n}$ is obtained from the fan of~$\CP^n$ by applying successive star subdivisions along all cones of the fan of~$\CP^n$, starting from those of dimension~$n-1$ down to dimension~$1$.
    In particular, this gives rise to a sequence of toric blow-ups
    $$
    X_{A_n} \to \cdots \to \CP^n.
    $$
    The real locus of~$\CP^n$ is~$\RP^n$, and the real locus~$X^\R_{A_n}$ of~$X_{A_n}$ is known~\cite{Henderson2012, Choi-Yoon2023} as the real permutohedral variety.
    Since $n$ is an odd number, we have
    $$
    \beta_n(\RP^n;\Q) = 1 \quad\text{while}\quad  \beta_n(X^\R_{A_n};\Q) = 0,
    $$
    which provides a counterexample to~\eqref{eq:ine}
\end{example}

In the rest of this section, we focus on toric varieties~$X_\B$ associated with building set~$\B$.
See Section~\ref{sec:pre} for relevant definitions.
Let~$\B$ be a connected building set on~$[n+1] = \{1,\ldots,n+1\}$.
A subset~$N \subseteq \B \setminus \{[n+1]\}$ is called a \emph{nested set} if it satisfies the following conditions:
\begin{enumerate}
  \item For each $I,J \in N$ one has either $I \subseteq J$, $J \subseteq I$, or $I \cap J = \emptyset$.
  \item For any collection of $k \geq 2$ disjoint subsets $I_1,\ldots,I_k \in N$, the union $I_1 \cup \cdots \cup I_k \notin \B$.
\end{enumerate}
Let~$e_1, \ldots, e_n$ be the standard basis vectors in~$\R^n$, and~$e_{n+1} := - (e_1 + \cdots + e_n)$.
For any proper subset~$I \subsetneq [n+1]$, define the vector
$$
v_I = \sum_{i \in I}e_i.
$$
Let~$\Sigma_\B$ be the fan whose cone~$\sigma_N$ is generated by the vectors~$\{v_I \colon I \in N\}$ for each nested set~$N \subseteq \B \setminus \{[n+1]\}$.
The fan~$\Sigma_\B$ is called the \emph{nested fan} of dimension~$n$ associated with~$\B$, and it is known~\cite{Zel2006} to be the fan of the toric variety~$X_\B$.

Now, let~$\B$ and~$\B'$ be connected building sets on a finite set~$S$.
If~$\B \subset \B'$, then there is a sequence of toric blow-ups as above:
$$
X_{\B'} \to \cdots \to X_{\B}.
$$
As a refinement of Question~\ref{ques1}, we state the following.
\begin{question}\label{ques2}
    Characterize pairs~$(\B,\B')$ of connected building sets~$\B' \subset \B$ on a finite set~$S$ such that
    \begin{equation}\label{eq:in2}
      \beta_i(X^\R_{\B'};\Q) \leq \beta_i(X^\R_{\B};\Q) \quad \text{for all } i \geq 0.
    \end{equation}
\end{question}
Note that the permutohedral variety~$X_{A_n}$ is the toric variety associated with 
the maximal connected building set~$2^{[n+1]} \setminus \{\emptyset\}$ on~$[n+1]$, 
while the projective space~$\CP^n$ corresponds to the minimal connected building set
\[
\{\{1\},\{2\},\ldots,\{n+1\},[n+1]\}
\]
on~$[n+1]$.
Thus, together with Example~\ref{ex:per_proj}, this shows that the inequality~\eqref{eq:in2} 
does not always hold.

Let~$G$ be a finite simple graph, and let~$\edge$ be an unordered pair of elements of~$V(G)$.
Consider the graphical building sets~$\B_G$ and~$\B_{G+\edge}$.
Then~$\B_{G+\edge}$ is the minimal building set that contains~$\B_G$ as a subset and includes~$\edge$ as an element.
Explicitly,
$$
\B_{G+\edge} = \B_G \cup \{I \subseteq V(G) \colon \edge \subseteq I \text{ and } G\vert_I \text{ is connected.}\}.
$$
From Corollary~\ref{cor:mon}, we obtain the following observation related to Question~\ref{ques2}.
\begin{corollary}
Let~$G$ be a finite simple graph.
If~$H$ is a subgraph of~$G$, then, for each~$i \ge 0$,
$$
\beta_i(X^\R_{H};\Q) \le \beta_i(X^\R_{G};\Q).
$$
\end{corollary}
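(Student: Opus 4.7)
The plan is to read this off as an immediate consequence of two ingredients already in the paper: the topological interpretation of the $a_i$-number as a Betti number, and the combinatorial monotonicity established a few pages earlier. Concretely, I would invoke the Choi–Park identification, recalled in the introduction and again at the start of Section~\ref{sec:graph}, which states that for any finite simple graph $G$ on any vertex set,
$$
\beta_i(X^\R_G;\Q) \;=\; a_i(G) \qquad \text{for all } i \ge 0.
$$
Applying this identity to both $H$ and $G$ reduces the claim to the purely combinatorial inequality $a_i(H) \le a_i(G)$.

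The second step is then a direct appeal to Corollary~\ref{cor:mon}, which is precisely this inequality. Since $H$ is a subgraph of $G$, Corollary~\ref{cor:mon} yields $a_i(H) \le a_i(G)$ for every $i \ge 0$, and substituting back through the Betti number identification produces the desired bound. No new combinatorial manipulation is required; the entire content has been packaged into Corollary~\ref{cor:mon} via the decomposition Theorem~\ref{thm:decom}.

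The only mildly delicate point, and the one I would address explicitly, is the case $V(H) \subsetneq V(G)$: a priori $X^\R_H$ and $X^\R_G$ are toric varieties associated with different ambient dimensions. This is handled exactly as in the proof of Corollary~\ref{cor:mon}: one enlarges $H$ to a spanning subgraph $\widetilde H$ of $G$ by adjoining the missing vertices of $V(G)\setminus V(H)$ as isolated vertices. Any subset $I \subseteq V(\widetilde H)$ of size $2i$ that meets the new isolated vertices induces a subgraph with an odd-order (size-one) component, so $a(\widetilde H|_I)=0$; hence $a_i(\widetilde H) = a_i(H)$, and likewise $\beta_i(X^\R_{\widetilde H};\Q) = \beta_i(X^\R_H;\Q)$ by the identification above. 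Thus we may assume $H$ is a spanning subgraph of $G$, in which case the inequality is exactly the spanning-subgraph case of Corollary~\ref{cor:mon}.

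There is no real obstacle here; the corollary is a one-step topological translation of the monotonicity result, and the heavy lifting has already been done by Theorem~\ref{thm:decom} and its combinatorial consequence Corollary~\ref{cor:mon}. The only thing worth flagging for the reader is that this places the Betti numbers of the family $\{X^\R_G\}$ squarely in the setting of Question~\ref{ques2}: the pair $(\B_H, \B_G)$ with $H \subseteq G$ always satisfies the desired Betti number inequality~\eqref{eq:in2}, in sharp contrast to Example~\ref{ex:per_proj}.
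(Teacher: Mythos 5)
Your proposal is correct and matches the paper's own (implicit) argument exactly: the corollary is stated as an immediate consequence of Corollary~\ref{cor:mon} via the Choi--Park identification $\beta_i(X^\R_G;\Q)=a_i(G)$, with the non-spanning case handled by padding $H$ with isolated vertices just as in the proof of Corollary~\ref{cor:mon}. No discrepancies to flag.
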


Some partial results related to Question~\ref{ques2} can be obtained beyond the class of graphical building sets.

\begin{remark}
     For a chordal building set~$\B$ on a finite set~$S$ of positive integers, it is known that the $i$th Betti number of~$X_\B$ equals the number of alternating $\B$-permutations on~$S$ of length~$2i$.
    Consider a pair~$(\B,\B')$ of chordal building sets with~$\B' \subseteq \B$.
    Since every $\B'$-permutation is also a $\B$-permutation, it follows that, for each~$i \geq 0$,
    $$
    \beta_i(X^\R_{\B'};\Q) \leq \beta_i(X^\R_{\B};\Q).
    $$
    For further details, see~\cite{Choi-Yoon2026}.
\end{remark}

\subsection{Non-existence of Lefschetz operators}\label{subsec:Lefschetz}
As observed in Section~\ref{sec:unimodal}, all graphs we have observed exhibit unimodal $a$-sequences.
In the case of projective complex toric varieties, Poincar\'e duality together with the injectivity of the Lefschetz operator
\begin{equation}\label{eq:Lef}
  L \colon H^k(X;\Q) \longrightarrow H^{k+2}(X;\Q)
\end{equation}
plays a crucial role in establishing unimodality.
Motivated by this analogy, we investigate whether real toric varieties associated with graphs admit an analogue of the Lefschetz operator
$$
H^k(X^\R_G;\Q) \longrightarrow H^{k+1}(X^\R_G;\Q).
$$
Just as the operator in~\eqref{eq:Lef} is defined by cupping with a distinguished element of~$H^2(X;\Q)$, namely the symplectic form, one may hope that, if an element in the first cohomology of~$X^\R_G$ playing a similar role could be chosen, it would provide a geometric explanation for the unimodality of the $a$-sequence.

Let~$G$ be a simple graph on a finite set~$S$.
A subset~$N \subseteq \B(G) \setminus \{S\}$ is called a \emph{nested set} if it satisfies the following conditions.
\begin{enumerate}
  \item For each $I,J \in N$ one has either $I \subseteq J$, $J \subseteq I$, or $I \cap J = \emptyset$.
  \item For any collection of $k \geq 2$ pairwise disjoint subsets $I_1,\ldots,I_k \in N$, the union $I_1 \cup \cdots \cup I_k$ does not belong to~$\B(G)$.
\end{enumerate}
The simplicial complex~$K_G$ is defined as the collection of all nested sets of~$\B(G)$ and is called the \emph{nested set complex} of~$\B(G)$.
For an even-cardinality subset $I \subset S$, let $(K_G)_I$ denote the full subcomplex of~$K_G$ induced by all vertices $J$ such that $|J \cap I|$ is odd.
By~\cite{Choi-Park2015,Choi-Park2017_torsion}, we have
\begin{equation}\label{Betti_comp}
  H^k(X_G^\R;\Q) \cong \bigoplus_{\substack{I \subset S \\ \left\vert I \right\vert =2k}} \widetilde{H}^{k-1} ((K_G)_I;\Q) \quad \text{as $\Q$-vector spaces},
\end{equation}
where $\widetilde{H}^\ast((K_G)_I;\Q)$ denotes the reduced (rational) cohomology of the full-subcomplex~$(K_G)_I$.
Moreover, when~$G|_I$ has a connected component of odd order, the complex $(K_G)_I$ is contractible.
Providing a fully explicit combinatorial description of the multiplicative
structure of cohomology ring~$H^\ast(X_G^\R;\Q)$ is quite challenging.
However, it follows from~\cite{Choi-Park2017_multiplicative} that a necessary condition for the product
$$
\smile \colon \widetilde{H}^{k-1} ((K_G)_I;\Q) \otimes \widetilde{H}^{\ell-1} ((K_G)_J;\Q) \to \widetilde{H}^{k+\ell-1} ((K_G)_{I \cup J};\Q)
$$
to be nontrivial is that~$I \cap J \neq \emptyset$.
In the case of a star graph, every subgraph which has no component of odd order should contain the universal vertex, and then, the cup product in cohomology is always trivial.
As this example illustrates, even for~$X^\R_G$ it is not possible to carry out a construction analogous to that used in complex toric varieties to define a Lefschetz operator.

\end{document}